\newcommand{\B}[1]{{\mathbf #1}}
\def\@settitle{\begin{center}%
  \baselineskip14\p@\relax
    {\Large\textit \@title}
  \end{center}%
}
\def\@setauthors{%
  \begingroup
  \def\thanks{\protect\thanks@warning}%
  \trivlist
  \centering\footnotesize \@topsep30\p@\relax
  \advance\@topsep by -\baselineskip
  \item\relax
  \author@andify\authors
  \def\\{\protect\linebreak}%
  {\scshape \authors}%
  \ifx\@empty\contribs
  \else
    ,\penalty-3 \space \@setcontribs
    \@closetoccontribs
  \fi
  \endtrivlist
  \endgroup
}
\newtheorem{theorem}[subsection]{Theorem}
\newtheorem{corollary}[subsection]{Corollary}
\newtheorem{lemma}[subsection]{Lemma}
\newtheorem{proposition}[subsection]{Proposition}
\theoremstyle{definition}
\newtheorem{example}[subsection]{Example}
\theoremstyle{remark}
\newtheorem*{remark*}{Remark}
\numberwithin{equation}{section}
\numberwithin{figure}{section}
\numberwithin{table}{section}
\newcommand{\OP}{\operatorname}
\definecolor{dred}{RGB}{200,00,10}
\begin{document}

\title[Concordance group]{Concordance group and stable commutator length in braid groups}
\author[Brandenbursky]{Michael Brandenbursky}
\address{CRM, University of Montreal, Canada}
\email{michael.brandenbursky@mcgill.ca}
\author[K\k{e}dra]{Jarek K\k{e}dra}
\address{University of Aberdeen and University of Szczecin}
\email{kedra@abdn.ac.uk}

\begin{abstract}
We define  quasihomomorphisms from braid groups to the concordance group of
knots and examine its properties and consequences of its existence. In
particular, we provide a relation between the stable four ball genus in the
concordance group and the stable commutator length in braid groups, and produce
examples of infinite families of concordance classes of knots with uniformly
bounded four ball genus.  We also provide applications to the geometry of the
infinite braid group $\B B_\infty$.  In particular, we show that the commutator
subgroup $[\B B_{\infty},\B B_{\infty}]$ admits a stably unbounded conjugation
invariant norm. This answers an open problem posed by Burago, Ivanov and
Polterovich.
\end{abstract}

\maketitle
\section{Introduction} \label{S:intro}

We define a map $\Psi_n\colon \B B_n\to \OP{Conc}(\B S^3)$ from the braid group
on $n$ strings to the concordance group of knots in the three dimensional
sphere and observe that it has good algebraic and geometric properties. The map
is defined by closing braids appropriately. The maps defined for various
numbers of strings  are compatible with inclusions and hence they induce a map
on the infinite braid group. We examine the geometric properties with
respect to the conjugation invariant word norm on the braid group and the norm
defined by the four ball genus on the concordance group. The first observation
is that the map $\Psi_n$ is a quasihomomorphism with the defect depending on
$n$ and the second is that the map is Lipschitz with the Lipschitz constant
independent on~$n$. The latter implies that the map defined on the infinite
braid group is also Lipschitz.

We then investigate the consequences of the above observations. Using our
knowledge of the conjugation invariant geometry of braid groups we provide
applications to knot theory. Namely we give recipes for producing
infinite families of knots which have uniformly bounded four ball genus.
Such examples were known before so what is new here is our method. We
also produce examples of infinite families of prime knots with unbounded
stable four ball genus. We relate the stable commutator length
on braid groups to the stable four ball genus which answers a question
of Livingston \cite{MR2745668} (who, in turn, attributes the question to Calegari).

Our main application to the geometry of braids is to prove that the commutator
subgroup of the infinite braid group is stably unbounded.  The latter means
that there is an element $g\in [\B B_{\infty},\B B_{\infty}]$ such that
$\lim_{n\to \infty}\frac{\|g^n\|}{n}>0$, where $\|g\|$ denotes the conjugation
invariant word norm of $g$.  This example is interesting because this (perfect)
group admits no nontrivial quasimorphisms (the main tool for proving stable
unboundedness) and its commutator length is bounded.  This answers an open
problem from the paper by Burago, Ivanov and Polterovich \cite{MR2509711}.  Another example for
the same problem has been recently provided by Kawasaki \cite{Kawasaki} who showed
that the group of symplectic diffeomorphisms of the Euclidean space is stably
unbounded.

In the remaining part of the introduction we state our results and provide
more details.

\subsection*{Two elementary observations}
Let $A$ be an abelian group equipped with a pseudo\-norm and
let $|a|$ denote the value of the pseudo\-norm on an element
$a\in~A$. Let $G$ be a group. A map
$\psi\colon G\to A$ is called a {\em quasihomomorphism} if there
exist a constant $D_{\psi}\geq 0$ such that
$$
|\psi(g)-\psi(gh)+\psi(h)|\leq D_{\psi}
$$
for all $g,h\in G$. The number $D_{\psi}$ is called the defect of $\psi$.
A real valued quasihomomorphism is traditionally called
a {\em quasimorphism}.

Let $(\phantom{a})\colon \B B_n\to \Sigma_n$ be the natural
projection from the braid group to the symmetric group.
Let $\B B_n^K$ denote the subset consisting of braids whose closures
are knots. We define a projection
$\pi_n\colon \B B_n\to \B B_n^K$
by sending a braid $\alpha$ to the braid
$\alpha\sigma_{(\alpha)}$, where the braid $\sigma_{(\alpha)}$
depends only on the permutation induced by $\alpha$
(see Section \ref{SS:Psin}).

Let $\OP{Conc}(\B S^3)$ denote the group of smooth concordance classes of
knots in the three dimensional sphere. It is equipped with
a norm defined by the four ball genus
(see Section \ref{S:preliminaries} for details).
Let $\Psi_n\colon\B B_n\to \OP{Conc}(\B S^3)$ be
defined as the composition
$$
\B B_n\stackrel{\pi_n}\longrightarrow
\B B_n^K \stackrel{\OP{closure}}\longrightarrow
{\bf KNOTS}\stackrel{[-]}\longrightarrow \OP{Conc}(\B S^3)
$$
$$
\Psi_n(\alpha) := [\widehat{\alpha\sigma_{(\alpha)}}],
$$
where $\widehat{\alpha}$ denotes the closure of the braid
$\alpha$. Here is our first observation which was essentially
proven by the first author in \cite{MR2851716}.

\begin{theorem}\label{T:main}
The map  $\Psi_n\colon \B B_n\to \OP{Conc}(\B S^3)$ is a quasihomomorphism with
respect to the four ball genus norm and with
defect  $D_{\Psi_n}\leq 3n+1$. Its image contains all concordance
classes represented by knots which are closures of braids on
$n$ strings.
\end{theorem}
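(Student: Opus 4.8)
The plan is to read the quasihomomorphism inequality geometrically and then bound it by counting band (saddle) moves. Write $A=g\sigma_{(g)}$, $B=h\sigma_{(h)}$ and $C=(gh)\sigma_{(gh)}$; all three lie in $\B B_n^K$, so $\widehat A,\widehat B,\widehat C$ are knots, and moreover they all carry the \emph{same} permutation, the fixed $n$-cycle $c$ into which $\pi_n$ normalises. Since $\OP{Conc}(\B S^3)$ is abelian, its norm is the four ball genus $g_4$, and the inverse of a class is represented by the reverse mirror, the quantity to be bounded is $|\Psi_n(g)+\Psi_n(h)-\Psi_n(gh)|=g_4(\widehat A\,\#\,\widehat B\,\#\,(-\widehat C))$. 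I would then invoke the standard fact that $g_4(J_0\,\#\,(-J_1))$ is at most the genus of any connected oriented cobordism between the knots $J_0$ and $J_1$ in $\B S^3\times[0,1]$: such a cobordism built from $b$ saddles and no births or deaths has two boundary circles and Euler characteristic $-b$, hence genus $b/2$. Everything therefore reduces to exhibiting an explicit band cobordism from $\widehat A\,\#\,\widehat B$ to $\widehat C$ and counting its saddles; the target defect $3n+1$ corresponds to a budget of about $6n$ bands.

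The first building block is the concatenation cobordism, which is the core estimate already present in \cite{MR2851716}. For any $\alpha,\beta\in\B B_n$ there is a cobordism from $\widehat\alpha\sqcup\widehat\beta$ to $\widehat{\alpha\beta}$ using exactly $n$ saddles: stack $\widehat\beta$ below $\widehat\alpha$ and, for each of the $n$ strands, attach a single band joining the closure arc of $\widehat\alpha$ to the corresponding closure arc of $\widehat\beta$, reconnecting the bottom of $\alpha$ to the top of $\beta$ and the bottom of $\beta$ to the top of $\alpha$. This is one saddle per strand, and the result is precisely the closure of the stacked braid $\alpha\beta$. Prepending one further band that fuses $\widehat A\sqcup\widehat B$ into the connected sum $\widehat A\,\#\,\widehat B$, this produces a band cobordism from $\widehat A\,\#\,\widehat B$ to $\widehat{AB}$ costing $n+1$ saddles.

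It remains to pass from $\widehat{AB}$ to $\widehat C$, and this is the \emph{main obstacle}. The braids $AB=g\sigma_{(g)}h\sigma_{(h)}$ and $C=(gh)\sigma_{(gh)}$ carry the permutations $c^2$ and $c$, which for even $n$ are not even conjugate, so one cannot identify their closures directly; the discrepancy must be corrected by bands. The crucial point is that this correction is entirely a matter of how the $n$ strands are reconnected at the closure, together with the creation, deletion, or recombination of the permutation braids $\sigma_{(g)},\sigma_{(h)},\sigma_{(gh)}$, and that all of it can be realised by a number of saddles that is linear in $n$ and independent of $g$ and $h$. Here I would use the cyclic invariance of the closure to bring the permutation braids to a common location, and then observe that combing a positive permutation braid into the closure arcs merely alters the connection pattern of the $n$ closure arcs; any two such patterns differ by at most $n-1$ saddles, so the length of each $\sigma_{(\cdot)}$, which is a priori of order $\binom n2$, never enters the count. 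Tabulating the contributions — the $n$ concatenation saddles, the band fusing into the connected sum, and the at most $\approx 5n$ saddles reconciling the three permutation corrections and the $c^2$ versus $c$ mismatch — yields a cobordism with at most about $6n$ saddles, hence defect at most $3n+1$. Making this bookkeeping exact, rather than merely linear in $n$, is the delicate step on which the stated constant depends.

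Finally, for the assertion about the image, let $K$ be any knot that is the closure of a braid $\gamma\in\B B_n$. Then $\widehat\gamma$ is a knot, so $(\gamma)$ is an $n$-cycle and $\gamma\in\B B_n^K$. Conjugating $\gamma$ if necessary, which does not change its closure, we may assume $(\gamma)=c$, the reference $n$-cycle used to normalise $\pi_n$; for such a braid the correction $\sigma_{(\gamma)}$ is trivial, so $\pi_n(\gamma)=\gamma$. Hence $\Psi_n(\gamma)=[\widehat{\pi_n(\gamma)}]=[\widehat\gamma]=[K]$, and every concordance class represented by the closure of an $n$-strand braid lies in the image of $\Psi_n$.
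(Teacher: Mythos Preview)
Your overall plan---build an explicit band cobordism between $\widehat A\#\widehat B$ and $\widehat C$ and count saddles---is exactly the paper's, and your first building block (the $n$-saddle concatenation cobordism $\widehat\alpha\sqcup\widehat\beta\rightsquigarrow\widehat{\alpha\beta}$) is the same move the paper uses. The genuine gap is in the second step, going from $\widehat{AB}=\widehat{g\,\sigma_{(g)}\,h\,\sigma_{(h)}}$ to $\widehat C=\widehat{gh\,\sigma_{(gh)}}$. Your plan is to ``use cyclic invariance to bring the permutation braids to a common location'' and then comb them into the closure arcs, but cyclic rotation of the word $g\,\sigma_{(g)}\,h\,\sigma_{(h)}$ can move at most one of the two $\sigma$-blocks to an end; the other one remains sandwiched between $g$ and $h$, and there is no isotopy absorbing a braid in the \emph{middle} of a word into the closure arcs. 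So the ``$\approx 5n$ saddles'' you budget for this step has no argument behind it, and you yourself flag this as ``the delicate step on which the stated constant depends''.

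The paper avoids this obstacle by splitting further rather than trying to slide the $\sigma$-blocks around. Applying the $n$-saddle cobordism twice to $\widehat C$ separates it into $\widehat g\sqcup\widehat h\sqcup\widehat{\sigma_{(gh)}}$, and likewise $\widehat A\#\widehat B$ separates (via $2n+1$ saddles) into $\widehat g\sqcup\widehat h\sqcup\widehat{\sigma_{(g)}}\sqcup\widehat{\sigma_{(h)}}$. The key fact you are missing is that each $\widehat{\sigma_{(\gamma)}}$ is a \emph{trivial link} (this is built into the construction of $\sigma_{(\gamma)}$), so the remaining discrepancy is between two unlinks and is essentially free. Stringing these together gives at most $6n$ one-handles and hence defect at most $3n+1$. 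Two smaller corrections: first, $\pi_n$ does \emph{not} normalise to a single fixed $n$-cycle---by construction $\sigma_{(\gamma)}$ is trivial for every $\gamma\in\B B_n^K$, so $\pi_n$ is the identity there and different outputs carry different $n$-cycles (this incidentally makes your image argument work without any conjugation); second, the word length of $\sigma_{(\gamma)}$ is at most $n-1$, one generator per extra cycle of $(\gamma)$, not of order $\binom n2$.
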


\begin{remark*}
The quasihomomorphisms $\Psi_n$ are compatible with
the inclusions $\B B_n\to \B B_{n+1}$ and hence
the above construction defines a  surjective
map $\Psi_{\infty}\colon\B B_{\infty}\to \OP{Conc}(\B S^3)$
(see Section \ref{SS:Psin}).
However, using the fact that every homogeneous quasimorphism on $\B B_{\infty}$
must be a homomorphism \cite{MR2509718}, we show that the map
$\Psi_{\infty}$ can't be a quasihomomorphism~(Proposition \ref{P:infty}).
\end{remark*}

{\em Convention.} It is always assumed that $n$ in the notation
$\B B_n$ for the braid group is a natural number. Some of our
statements extend to the infinite braid group. In such cases
we emphasize that $n\in \B N\cup \{\infty\}$.


Let $\sigma_1,\ldots,\sigma_{n-1}\in \B B_n$ be the
standard Artin generators of the braid group. That
is, the braid $\sigma_i$ swaps the $i$-th and the
$(i+1)$-st string. Observe that these braids are
pairwise conjugate thus $\B B_{n}$ is normally generated
by the symmetric set $\{\sigma_1^{\pm 1}\}$,
where $n\in \B N\cup\{\infty\}$.
Let us consider the associated conjugation invariant word norm
on $\B B_n$, denoted by $\|\alpha\|$, and
the induced biinvariant metric
defined by $\OP{d}(\alpha,\beta):=\|\alpha\beta^{-1}\|$.

\begin{theorem}\label{T:lipschitz}
Let $n\in\B N\cup\{\infty\}$.
The map $\Psi_n\colon \B B_n\to \OP{Conc}(\B S^3)$
is Lipschitz with respect to the biinvariant word norm on
the braid group and the four ball genus norm on the concordance
group. More precisely,
$$
\OP{g_4}(\Psi_n(\alpha))\leq \frac{1}{2}\|\alpha\|
$$
for all braids $\alpha\in \B B_n$.
\end{theorem}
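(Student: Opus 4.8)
\subsection*{Proof proposal}

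The plan is to produce, for each braid $\alpha\in\B B_n$, an explicit oriented surface in the four ball $\B B^4$ whose boundary is the knot $\widehat{\alpha\sigma_{(\alpha)}}$ and whose genus is at most $\frac12\|\alpha\|$; the inequality $\OP{g_4}(\Psi_n(\alpha))\leq\frac12\|\alpha\|$ is then immediate. Set $k=\|\alpha\|$. Since all the Artin generators are pairwise conjugate, the symmetric generating set may be taken to be the conjugates of $\sigma_1^{\pm1}$, so by definition of the conjugation invariant word norm we may write $\alpha=c_1c_2\cdots c_k$ with each $c_j=g_j\,\sigma_{i_j}^{\pm1}\,g_j^{-1}$ a conjugate of a single generator. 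The target surface will be assembled from one disk and exactly $k$ bands, so that a single Euler characteristic computation delivers the factor $\frac12$.

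The geometric heart is the following elementary observation, which I would isolate as the key step. Inside each factor $c_j=g_j\,\sigma_{i_j}^{\pm1}\,g_j^{-1}$ there is a \emph{unique} crossing, namely the $\sigma_{i_j}^{\pm1}$; performing its oriented (Seifert) resolution replaces $\sigma_{i_j}^{\pm1}$ by the trivial braid and hence turns $c_j$ into $g_j g_j^{-1}=1$. At the level of closures an oriented resolution of one crossing is a single saddle move, i.e.\ the attachment of one orientation-compatible band. Resolving the $k$ crossings coming from $c_1,\dots,c_k$ one at a time therefore produces an oriented cobordism $C\subset \B S^3\times[0,1]$, built from $k$ bands on a product cobordism, relating $\widehat{\alpha\sigma_{(\alpha)}}$ to the closure obtained once the factor $\alpha$ has been completely resolved. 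Each band lowers the Euler characteristic by one, so $\chi(C)=-k$.

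To finish I would arrange, using the construction of the correction braid $\sigma_{(\alpha)}$ from Section \ref{SS:Psin}, that the end of $C$ opposite to $\widehat{\alpha\sigma_{(\alpha)}}$ is the unknot $\widehat{\sigma_{(e)}}=\Psi_n(e)$, which bounds a disk $D\subset\B B^4$ with $\chi(D)=1$. Gluing $D$ to $C$ along that unknot gives a surface $\Sigma\subset\B B^4$ with $\partial\Sigma=\widehat{\alpha\sigma_{(\alpha)}}$ and $\chi(\Sigma)=\chi(C)+\chi(D)=1-k$. Discarding closed components only decreases the genus, so the component bounded by the knot has genus $g$ with $1-2g\geq 1-k$, whence $g\leq k/2$ and $\OP{g_4}(\Psi_n(\alpha))\leq\frac12\|\alpha\|$. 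The case $n=\infty$ follows at once: every $\alpha\in\B B_\infty$ lies in some $\B B_n$, its norm is computed with the same conjugate generators, $\Psi_\infty$ restricts to $\Psi_n$ by the compatibility with the inclusions, and the bound $\frac12\|\alpha\|$ is uniform in $n$.

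The main obstacle I anticipate is the bookkeeping around the permutation correction $\sigma_{(\alpha)}$. Inserting the conjugates $c_j$ one at a time changes the underlying permutation, and hence would change the correction factor at each stage; the band count $k$ is clean only if the correction part contributes no genus. The decisive point is therefore to verify, from the definition of $\sigma_{(\alpha)}$ in Section \ref{SS:Psin}, that resolving the $k$ crossings of $\alpha$ while keeping $\sigma_{(\alpha)}$ fixed returns a genus-zero (unknot or unlink) closure, so that capping costs nothing. Once this is pinned down the Euler characteristic computation is entirely routine, and the orientation-compatibility of the saddles guarantees that $\Sigma$ is an oriented surface of the asserted genus.
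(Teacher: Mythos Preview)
Your approach is correct and essentially identical to the paper's: write $\alpha$ as a product of $k=\|\alpha\|$ conjugates of $\sigma_1^{\pm1}$, resolve the $k$ central crossings by saddle moves to get a cobordism of Euler characteristic $-k$ from $\widehat{\alpha\sigma_{(\alpha)}}$ to $\widehat{\sigma_{(\alpha)}}$, then cap off. The one clarification concerns the point you yourself flag as the ``main obstacle'': after resolving those $k$ crossings with $\sigma_{(\alpha)}$ held fixed you land on $\widehat{\sigma_{(\alpha)}}$, not $\widehat{\sigma_{(e)}}$, and this is a trivial \emph{link} (one of the bullet points in Section~\ref{SS:Psin}), not in general the unknot; capping its $m\geq 1$ components with disks gives $\chi(\Sigma)=m-k\geq 1-k$, and since the cobordism is connected (it starts from a knot) no closed components appear, so $g(\Sigma)=(1-\chi(\Sigma))/2\leq k/2$ exactly as claimed.
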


\begin{remark*}
\begin{enumerate}[leftmargin=*]
\item
It follows from Theorem \ref{T:main} that $\Psi_n$ is Lipschitz
with constant bounded above by the defect $D_{\Psi_n}$. We get
a smaller constant in the above theorem by a more direct
and elementary geometric argument.
\item
If one defines a metric on the concordance group by
$$
\OP{d_4}(K,L):=\OP{g_4}(K-L)
$$
then it follows from the above theorems that
$$
\OP{d_4}(\Psi_n(\alpha),\Psi_n(\beta))
\leq \frac{1}{2}\OP{d}(\alpha,\beta) + D_{\Psi_n}.
$$
That is, the map $\Psi_n$ is large scale Lipschitz with respect
to the metrics and for any natural number $n\in \B N$.
\end{enumerate}
\end{remark*}

In the remaining part of this introduction we discuss applications
and consequences of the above observations.

\subsection*{Quasimorphisms on braid groups}
Composing the quasihomomorphism $\Psi_n$ with a suitable quasimorphism
defined on the concordance group yields a quasimorphism
on the braid group. More precisely, we have the following
observation.

\begin{corollary}\label{C:compositions}
Let $\varphi\colon \OP{Conc}(\B S^3)\to \B R$ be a quasimorphism.
If $\varphi$ is
Lipschitz with respect to the four ball genus norm then the composition
$\varphi\circ \Psi_n\colon \B B_n\to \B R$ is a quasimorphism.
\end{corollary}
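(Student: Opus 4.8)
The plan is to combine the three estimates now available: the quasihomomorphism bound for $\Psi_n$ supplied by Theorem \ref{T:main}, the defect inequality for the quasimorphism $\varphi$, and the Lipschitz condition on $\varphi$. The guiding principle is that $\OP{Conc}(\B S^3)$ is abelian, written additively, so the failure of $\Psi_n$ to be a homomorphism is measured in the four ball genus norm; the Lipschitz hypothesis is precisely the device that converts such a norm estimate into a bound on the values of $\varphi$. No new geometry is needed beyond what the earlier results already encode.

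Concretely, I would fix braids $\alpha,\beta\in\B B_n$ and abbreviate $a:=\Psi_n(\alpha)$, $b:=\Psi_n(\beta)$, $c:=\Psi_n(\alpha\beta)$. Theorem \ref{T:main}, read through the definition of quasihomomorphism, gives $\OP{g_4}(a-c+b)\le D_{\Psi_n}$. I then split the quantity to be controlled as
$$
\varphi(a)-\varphi(c)+\varphi(b)=\bigl(\varphi(a)+\varphi(b)-\varphi(a+b)\bigr)+\bigl(\varphi(a+b)-\varphi(c)\bigr).
$$
The first parenthesis is bounded by the defect $D_\varphi$ of the quasimorphism $\varphi$, since $a+b$ is the group product of $a$ and $b$ in $\OP{Conc}(\B S^3)$. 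For the second parenthesis I invoke the Lipschitz hypothesis in the form $|\varphi(x)-\varphi(y)|\le C\,\OP{g_4}(x-y)$, applied with $x=a+b$ and $y=c$; because $(a+b)-c=a-c+b$, this yields $|\varphi(a+b)-\varphi(c)|\le C\,\OP{g_4}(a-c+b)\le C\,D_{\Psi_n}$. Combining the two estimates gives
$$
\bigl|\varphi(\Psi_n(\alpha))-\varphi(\Psi_n(\alpha\beta))+\varphi(\Psi_n(\beta))\bigr|\le D_\varphi+C\,D_{\Psi_n},
$$
so $\varphi\circ\Psi_n$ is a quasimorphism with defect at most $D_\varphi+C\,D_{\Psi_n}$.

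The argument is elementary, so I do not expect a serious obstacle; the only real care is bookkeeping. I would make the meaning of each hypothesis explicit — in particular that ``Lipschitz with respect to the four ball genus norm'' is used as $|\varphi(x)-\varphi(y)|\le C\,\OP{g_4}(x-y)$ — and I would point out that the additive identity $(a+b)-c=a-c+b$ is exactly what lets the Lipschitz bound feed on the quasihomomorphism estimate of Theorem \ref{T:main}. It is worth remarking that the resulting defect depends on $n$ through $D_{\Psi_n}\le 3n+1$, so the construction produces a quasimorphism on each fixed $\B B_n$ while the bound degenerates as $n\to\infty$, which is consistent with the fact that $\Psi_\infty$ fails to be a quasihomomorphism.
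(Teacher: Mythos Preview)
Your proof is correct and follows essentially the same route as the paper: both introduce the error $E=\Psi_n(\alpha\beta)-\Psi_n(\alpha)-\Psi_n(\beta)$ with $\OP{g_4}(E)\le D_{\Psi_n}$, apply the defect of $\varphi$ to peel off $\varphi(a)+\varphi(b)$, and then use the Lipschitz hypothesis to control the remaining term. The only cosmetic difference is that you invoke Lipschitz in the metric form $|\varphi(x)-\varphi(y)|\le C\,\OP{g_4}(x-y)$ while the paper uses it in the norm form $|\varphi(E)|\le C_\varphi\,\OP{g_4}(E)$ together with one more application of the defect, yielding $2D_\varphi+C_\varphi D_{\Psi_n}$ instead of your $D_\varphi+C\,D_{\Psi_n}$.
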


This idea was used by the first author in~\cite{MR2851716}.
The next applications provide new results.

\subsection*{The quasihomomorphism $\Psi_n$ is Lipschitz with respect to the commutator length}
The commutator length $\OP{cl}(g)$ of an element $g$ in $[G,G]$ is defined
to be the minimal number of commutators in $G$ whose product
is equal to $g$. The following result  is an
application of Theorem \ref{T:main} and a general fact
about quasihomomorphisms presented in Lemma \ref{L:qm}.

\begin{corollary}\label{C:lipschitz-bi}
The restriction of the quasihomomorphism $\Psi_n$ to the
commutator subgroup $[\B B_n,\B B_n]$ is Lipschitz with
respect to the commutator length. More precisely,
$$
\OP{g_4}(\Psi_n(\alpha))\leq 6D_{\Psi_n}\, \OP{cl}(\alpha)
$$
for any $\alpha\in [\B B_n,\B B_n]$.
\end{corollary}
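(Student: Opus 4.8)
The plan is to read the statement off Theorem~\ref{T:main} together with the general estimate of Lemma~\ref{L:qm}. Theorem~\ref{T:main} tells us that $\Psi_n$ is a quasihomomorphism of defect $D_{\Psi_n}$ into the abelian group $\OP{Conc}(\B S^3)$, whose pseudonorm is the four ball genus $\OP{g_4}$. Lemma~\ref{L:qm} is precisely the general fact that a quasihomomorphism $\psi\colon G\to A$ of defect $D$ into a pseudonormed abelian group obeys $|\psi(g)|\le 6D\,\OP{cl}(g)$ for every $g\in[G,G]$. Granting this, the corollary is immediate: apply the lemma with $\psi=\Psi_n$, with $|\cdot|=\OP{g_4}$, and with $g=\alpha\in[\B B_n,\B B_n]$, so that $\OP{g_4}(\Psi_n(\alpha))=|\Psi_n(\alpha)|\le 6D_{\Psi_n}\OP{cl}(\alpha)$.

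Since the substance lies in Lemma~\ref{L:qm}, I would next sketch the estimate that produces the constant $6$. Abbreviate $\delta(x,y):=\psi(xy)-\psi(x)-\psi(y)$, so that the defect inequality reads $|\delta(x,y)|\le D$. Two preliminary remarks are needed. First, $\Psi_n(e)=0$: the identity braid has trivial permutation, and by the definition of $\pi_n$ its image closes to the unknot, which is the zero of $\OP{Conc}(\B S^3)$; I would therefore work with the normalisation $\psi(e)=0$. Second, with this normalisation the inverse identity $\psi(x)+\psi(x^{-1})=-\delta(x,x^{-1})$ gives the estimate $|\psi(x)+\psi(x^{-1})|\le D$.

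With these in hand the main computation is the expansion of a single commutator. Writing $aba^{-1}b^{-1}$ as an iterated product and collecting the four single-factor terms through the inverse estimate yields
\[
\psi([a,b])=-\delta(a,a^{-1})-\delta(b,b^{-1})+\delta(a,b)+\delta(a^{-1},b^{-1})+\delta(ab,a^{-1}b^{-1}),
\]
a sum of five defect terms, whence $|\psi([a,b])|\le 5D$. For $g\in[G,G]$ with $\OP{cl}(g)=k$ I would then write $g=c_1\cdots c_k$ as a product of $k$ commutators and expand the product, which introduces $k-1$ additional defects, to obtain
\[
|\psi(g)|\le\sum_{i=1}^{k}|\psi(c_i)|+(k-1)D\le 5kD+(k-1)D=(6k-1)D\le 6D\,\OP{cl}(g).
\]

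The main obstacle, and the only genuinely delicate point, is the bookkeeping of the defect contributions together with the normalisation $\psi(e)=0$. Without the latter each commutator carries an extra summand $2\psi(e)$, so the five-term bound degrades and the constant grows beyond $6$; it is exactly the identification of $\Psi_n(e)$ with the class of the unknot, forced by the definition of $\pi_n$, that keeps the constant sharp. Everything else is a mechanical application of the defect inequality.
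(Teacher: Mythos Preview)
Your proof is correct and follows essentially the same route as the paper: apply the commutator-length part of Lemma~\ref{L:qm} to $\Psi_n$, using that $\Psi_n(1_{\B B_n})$ is the class of the unknot so the normalisation $\psi(e)=0$ holds and the Lipschitz constant collapses to $6D_{\Psi_n}$. Your explicit five-term expansion for $\psi([a,b])$ is exactly the computation underlying Lemma~\ref{L:qm}(1) and Corollary~\ref{C:estimates}; the paper merely quotes those results rather than reproving them inline.
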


The above result does not extend to the infinite case
because the commutator length is bounded by $2$ on the
infinite braid group, according to Burago, Ivanov and Polterovich
(see Theorem \ref{T:bip}).

\subsection*{A relation between the stable four ball genus and the scl}
Livingston asked in \cite[Section 8.1]{MR2745668} whether there
is a connection between the stable commutator length in groups and the
stable four ball genus in $\OP{Conc}(\B S^3)$.  The next corollary provides
such a connection.

\begin{corollary}\label{C:scl}
If $\alpha \in [\B B_n,\B B_n]$ then
$$
\OP{sg_4}(\Psi_n(\alpha)) \leq 6D_{\Psi_n}\OP{scl}(\alpha) + D_{\Psi_n}.
$$
In particular, if the stable commutator length
of $\alpha$ is trivial then the stable four ball genus of
$\Psi_n(\alpha)$ is bounded above by the defect $D_{\Psi_n}$:
$$
\OP{scl}(\alpha) = 0 \qquad \Longrightarrow \qquad \OP{sg_4}(\Psi_n(\alpha))
\leq D_{\Psi_n}.
$$
\end{corollary}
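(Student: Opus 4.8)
The plan is to apply Corollary \ref{C:lipschitz-bi} to the powers $\alpha^{k}$ and then pass to the limit, using the quasihomomorphism property of $\Psi_n$ to compare $\Psi_n(\alpha^{k})$ with the $k$-fold multiple $k\Psi_n(\alpha)$ in the concordance group.

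First I would record the elementary consequence of $\Psi_n$ being a quasihomomorphism with defect $D_{\Psi_n}$ (Theorem \ref{T:main}). Writing $|\cdot|$ for the four ball genus pseudonorm on $\OP{Conc}(\B S^3)$, the defect inequality $|\Psi_n(g)-\Psi_n(gh)+\Psi_n(h)|\le D_{\Psi_n}$, applied iteratively with $g=\alpha^{k}$ and $h=\alpha$ and combined with an induction on $k$, yields
$$
|k\Psi_n(\alpha)-\Psi_n(\alpha^{k})|\le (k-1)D_{\Psi_n}.
$$
Since the genus norm is subadditive (connected sum satisfies $\OP{g_4}(K\# L)\le \OP{g_4}(K)+\OP{g_4}(L)$), the triangle inequality then gives
$$
\OP{g_4}(k\Psi_n(\alpha))\le \OP{g_4}(\Psi_n(\alpha^{k}))+(k-1)D_{\Psi_n}.
$$

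Next, observing that $\alpha^{k}\in[\B B_n,\B B_n]$ whenever $\alpha$ is, I would feed $\alpha^{k}$ into Corollary \ref{C:lipschitz-bi} to obtain $\OP{g_4}(\Psi_n(\alpha^{k}))\le 6D_{\Psi_n}\,\OP{cl}(\alpha^{k})$. Combining the two displays and dividing by $k$ produces
$$
\frac{\OP{g_4}(k\Psi_n(\alpha))}{k}\le 6D_{\Psi_n}\,\frac{\OP{cl}(\alpha^{k})}{k}+\frac{k-1}{k}\,D_{\Psi_n}.
$$
Letting $k\to\infty$ and recalling that $\OP{sg_4}(\Psi_n(\alpha))=\lim_{k}\OP{g_4}(k\Psi_n(\alpha))/k$ and $\OP{scl}(\alpha)=\lim_{k}\OP{cl}(\alpha^{k})/k$, both limits existing by subadditivity and Fekete's lemma, gives the asserted inequality $\OP{sg_4}(\Psi_n(\alpha))\le 6D_{\Psi_n}\OP{scl}(\alpha)+D_{\Psi_n}$. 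The special case is immediate upon setting $\OP{scl}(\alpha)=0$.

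The argument is essentially bookkeeping, so I do not expect a serious obstacle; the only points needing care are the correct accumulation of defects in the inductive estimate, yielding the factor $k-1$ rather than $k$, and the verification that the two stable quantities are genuine limits rather than mere lower limits, so that the inequality survives the passage to the limit. Both are standard.
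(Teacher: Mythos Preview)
Your proof is correct and follows essentially the same route as the paper's: both use the quasihomomorphism property to bound $\OP{g_4}(k\Psi_n(\alpha)-\Psi_n(\alpha^k))$ by $(k-1)D_{\Psi_n}$, invoke Corollary~\ref{C:lipschitz-bi} on $\alpha^k$, divide by $k$, and pass to the limit. The only cosmetic difference is that you combine the estimates before taking the limit, whereas the paper first extracts the inequality \eqref{eq:qm-g-st} relating $\limsup_k \OP{g_4}(\Psi_n(\alpha^k))/k$ to $\OP{sg_4}(\Psi_n(\alpha))$ and then bounds the $\limsup$; the content is identical.
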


\begin{remark*}
The braids $\alpha^{2n}$
from Example \ref{E:s1s3}
have trivial stable commutator length
and $\OP{sg_4}(\Psi_4(\alpha^{2n}))>0$.
The last inequality follows from the fact that the
$\Psi_4(\alpha^{2n})=[T_{2n+1}\# T_{2n-1}^*]$, where
$T_k$ is the torus knot obtained by closing the braid
$\sigma_1\in \B B_2$ and $T_k^*$ is its mirror image. The signature of $T_{2n+1}\# T_{2n-1}^*$
is equal to two and hence its stable four ball genus is
bounded from below by one, due to Murasugi inequality \eqref{Eq:Murasugi}.
\end{remark*}

\subsection*{Families of knots with uniformly bounded four ball genus}
The next result
can be used to produce  concrete infinite families of knots
(and concordance classes) with uniformly bounded four ball genus.

\begin{corollary}\label{C:bounded}
Let $\alpha \in [\B B_n,\B B_n]$.
If $\OP{scl}(\alpha)=0$ then the concordance classes
$\Psi_n(\alpha^k)$, for $k\in \B Z$,
have uniformly bounded four ball genus.
\end{corollary}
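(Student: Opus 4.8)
The plan is to derive Corollary \ref{C:bounded} directly from Corollary \ref{C:scl}. The key point is that the hypothesis $\OP{scl}(\alpha)=0$ is exactly the hypothesis appearing in the implication of Corollary \ref{C:scl}, so the work reduces to controlling $\OP{scl}(\alpha^k)$ in terms of $\OP{scl}(\alpha)$ and then invoking the bound uniformly over $k$.

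Let me work through the reasoning. The statement wants a bound on the four ball genus $\OP{g_4}(\Psi_n(\alpha^k))$ that does not depend on $k$. The natural route is through the stable four ball genus, so first I would recall that $\OP{sg_4}(\Psi_n(\alpha^k))$ bounds $\OP{g_4}(\Psi_n(\alpha^k))$ from below only stably, so I actually need to be careful: the stable four ball genus is $\lim_{j\to\infty}\OP{g_4}(j\cdot K)/j$, which controls the growth of $\OP{g_4}(j\cdot \Psi_n(\alpha^k))$ in $j$ but not directly the single value at $j=1$. The cleaner approach is to observe that $\Psi_n(\alpha^k)$ is, up to the defect, the $k$-fold ``sum'' of $\Psi_n(\alpha)$ in the concordance group; more precisely, since $\Psi_n$ is a quasihomomorphism with defect $D_{\Psi_n}$ (Theorem \ref{T:main}), iterating the quasihomomorphism inequality gives $\OP{g_4}\bigl(\Psi_n(\alpha^k)-k\,\Psi_n(\alpha)\bigr)\leq (k-1)D_{\Psi_n}$, hence $\OP{g_4}(\Psi_n(\alpha^k))\leq k\,\OP{g_4}(\Psi_n(\alpha))+(k-1)D_{\Psi_n}$. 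This bound still grows with $k$, so this naive iteration is not enough on its own.

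The correct device is the stable genus together with homogeneity of $\OP{scl}$. Since $\OP{scl}(\alpha^k)=|k|\,\OP{scl}(\alpha)=0$ whenever $\OP{scl}(\alpha)=0$, the implication in Corollary \ref{C:scl} applies to every power: $\OP{sg_4}(\Psi_n(\alpha^k))\leq D_{\Psi_n}$ for all $k$. Now I would unwind the definition $\OP{sg_4}(K)=\lim_{j}\OP{g_4}(jK)/j=\inf_j \OP{g_4}(jK)/j$ (the limit equals the infimum by subadditivity of $\OP{g_4}$ under connected sum). The subtlety is that $\OP{sg_4}$ a priori only controls the genus of multiples $j\,\Psi_n(\alpha^k)$, whereas we want a single element. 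The resolution is that $\OP{g_4}(K)\leq \OP{sg_4}(K)+(\text{correction})$ does not hold in general, so instead I would bound $\OP{g_4}(\Psi_n(\alpha^k))$ by relating it back to $\OP{g_4}$ through the Lipschitz estimate of Theorem \ref{T:lipschitz} combined with the scl bound, using that $\OP{scl}$ controls the stable word norm.

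Putting this together, the cleanest argument I expect is: apply Corollary \ref{C:scl} to each $\alpha^k$, using $\OP{scl}(\alpha^k)=|k|\,\OP{scl}(\alpha)=0$, to conclude $\OP{sg_4}(\Psi_n(\alpha^k))\leq D_{\Psi_n}$; then observe that $\Psi_n(\alpha^k)$ ranges over a set whose stable genus is uniformly bounded by $D_{\Psi_n}$, and combine this with the quasihomomorphism defect to pass from the stable quantity back to the unstable $\OP{g_4}$. The main obstacle, and the step deserving the most care, is precisely this passage from $\OP{sg_4}$ (a limit over connected-sum multiples) to the genuine $\OP{g_4}$ of the individual classes: one must verify that a uniform bound on the stable invariant, for the whole family $\{\Psi_n(\alpha^k)\}_{k\in\B Z}$, forces a uniform bound on $\OP{g_4}$ itself, which uses that the defect $D_{\Psi_n}$ is a fixed constant independent of $k$ and that the quasihomomorphism property lets one trade the stable bound for an honest bound at the cost of only finitely much error.
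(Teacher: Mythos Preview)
Your proposal has a genuine gap at exactly the step you flag as ``deserving the most care'': the passage from a uniform bound on $\OP{sg_4}(\Psi_n(\alpha^k))$ to a uniform bound on $\OP{g_4}(\Psi_n(\alpha^k))$. This passage simply fails. The stable four ball genus satisfies $\OP{sg_4}(K)\leq \OP{g_4}(K)$, never the reverse, and the quasihomomorphism property does not rescue you: the best it gives is
\[
\OP{g_4}(\Psi_n(\alpha^k)) \leq \OP{g_4}(k\,\Psi_n(\alpha)) + (k-1)D_{\Psi_n},
\]
and even when $\OP{sg_4}(\Psi_n(\alpha))\leq D_{\Psi_n}$ the quantity $\OP{g_4}(k\,\Psi_n(\alpha))$ may grow linearly in $k$ (subadditivity only gives $\OP{g_4}(kK)\leq k\,\OP{g_4}(K)$, and the limit being small says nothing about boundedness). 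So ``finitely much error'' is in fact error growing with $k$. There is no mechanism in your argument that produces a $k$-independent bound.

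The paper's proof avoids this trap by never touching $\OP{sg_4}$ at all. It uses a specific structural fact about braid groups, the bq-dichotomy (see Section~\ref{SS:cl}): for $\alpha\in[\B B_n,\B B_n]$, the vanishing $\OP{scl}(\alpha)=0$ implies that the entire cyclic subgroup $\langle\alpha\rangle$ is \emph{bounded} in the biinvariant word norm, i.e.\ $\sup_k\|\alpha^k\|<\infty$. This is much stronger than the tautology $\lim_k\|\alpha^k\|/k=0$, and it is false in general groups. Once $\|\alpha^k\|$ is uniformly bounded, the Lipschitz estimate $\OP{g_4}(\Psi_n(\alpha^k))\leq \tfrac{1}{2}\|\alpha^k\|$ from Theorem~\ref{T:lipschitz} (or the constant from Corollary~\ref{C:lipschitz-bi}) finishes the proof immediately. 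Your sketch mentions Theorem~\ref{T:lipschitz} in passing but never invokes the bq-dichotomy, which is the one nontrivial ingredient converting the stable hypothesis into an unstable conclusion.
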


\begin{remark*}
Infinite families of knots with bounded four ball genus
have been known since the 1960's \cite[Section 3.1]{MR2179265}.
Since it is easy to provide braids with trivial stable commutator length,
our corollary yields many families of knots for which checking the
boundedness of the four ball genus could be difficult otherwise.
\end{remark*}

Examples of braids with trivial stable commutator length abound.
For instance, a braid which is conjugate to its inverse has
trivial stable commutator length.

\begin{example}\label{E:s1S2}
Let $\alpha = \sigma_1\sigma_2^{-1}\in \B B_3$. It is
straightforward to see that $\Delta\alpha\Delta^{-1}=\alpha^{-1}$,
where $\Delta=\sigma_1\sigma_2\sigma_1$ is the Garside element.
Consequently, $\OP{scl}(\alpha)=0$ and it follows from Corollary
\ref{C:bounded} that the family consisting of the closures of
the braids $\alpha^k\sigma_{(\alpha^k)}$ has uniformly bounded
four ball genus. It is not difficult to show that this
family is infinite. However, it remains an open question
whether the family $\Psi_3(\alpha^k)$ of concordance classes
is infinite. It is known that each concordance class $\Psi_3(\alpha^k)$
is of order at most two in $\OP{Conc}(\B S^3)$.
\end{example}

\begin{example}\label{E:s1S3}
Let $\alpha=\sigma_1\sigma_3^{-1}\in \B B_4$. It is again straightforward
to see that this braid is conjugate to its inverse and hence it has
trivial stable commutator length. However, in this case we obtain
that the set of concordance classes $\Psi(\alpha^{2n})$ is infinite
(Section \ref{SS:s1S3}) and has uniformly bounded four ball genus.
\end{example}

\subsection*{Families of prime knots with unbounded stable four ball genus}
Let $G$ be a group and $\varphi\colon G\to\B R$ be a quasimorphism. We denote by
$\overline{\varphi}\colon G\to\B R$ the homogenization of $\varphi$, i.e.
$\overline{\varphi}(g):=\lim_{p\to\infty}\varphi(g^p)/p$. For more information about
quasimorphisms see \cite{MR2527432}.

\begin{corollary}\label{C:unboundedsg}
Let $\varphi\colon \OP{Conc}(\B S^3)\to \B R$ be a quasimorphism
which is Lipschitz with respect to the four ball genus norm.
Let $C_{\varphi}$ denote its Lipschitz constant.
If $\alpha \in \B B_n$ and $p\in \B N$ then
$$
\OP{sg_4}(\Psi_n(\alpha^p))\geq
\frac{\left|(\overline{\varphi\circ\Psi_n})(\alpha)\right|}{C_{\varphi}}\cdot p - D_{\Psi_n}.
$$
where $\overline{\varphi\circ\Psi_n}$ denotes the homogenization
of the quasimorphism $\varphi\circ\Psi_n$.
If particular, if the quasimorphism $\varphi\circ\Psi_n$ is unbounded
on the cyclic subgroup generated by $\alpha$ then the stable
genus of the knots $\Psi_n(\alpha^p)$ grows linearly with $p$.
\end{corollary}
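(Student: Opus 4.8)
The plan is to assemble three facts and then push everything through the stable limit defining $\OP{sg_4}$. Write $\psi:=\varphi\circ\Psi_n$; by Corollary \ref{C:compositions} this is a quasimorphism on $\B B_n$, with some defect $D_\psi$ and homogenization $\overline\psi=\overline{\varphi\circ\Psi_n}$. I would first record the standard estimate $|\psi(\alpha^{q})-q\,\overline\psi(\alpha)|\leq D_\psi$ for all $q$, which comes directly from homogeneity of $\overline\psi$ together with $|\psi-\overline\psi|\leq D_\psi$. I would also recall that $\OP{sg_4}(K)=\lim_{m\to\infty}\OP{g_4}(mK)/m$, the stable norm attached to the (subadditive) four ball genus.

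The core geometric step is to iterate the quasihomomorphism inequality of Theorem \ref{T:main}. Applying that inequality with $g=\alpha^{pk}$ and $h=\alpha^{p}$ and telescoping the resulting errors over $k=1,\dots,m-1$ gives
$$
\OP{g_4}\bigl(\Psi_n(\alpha^{pm})-m\,\Psi_n(\alpha^{p})\bigr)\leq (m-1)D_{\Psi_n}.
$$
Since $\OP{g_4}$ obeys the triangle inequality, this rearranges into the lower bound
$$
\OP{g_4}\bigl(m\,\Psi_n(\alpha^{p})\bigr)\geq \OP{g_4}\bigl(\Psi_n(\alpha^{pm})\bigr)-(m-1)D_{\Psi_n}.
$$
Next I would feed in the Lipschitz hypothesis on $\varphi$, which gives $|\varphi(K)|\leq C_\varphi\,\OP{g_4}(K)+|\varphi(0)|$, so that, using $\varphi(\Psi_n(\alpha^{pm}))=\psi(\alpha^{pm})$ and the quasimorphism estimate above,
$$
\OP{g_4}\bigl(\Psi_n(\alpha^{pm})\bigr)\geq \frac{|\psi(\alpha^{pm})|-|\varphi(0)|}{C_\varphi}\geq \frac{pm\,|\overline\psi(\alpha)|-D_\psi-|\varphi(0)|}{C_\varphi}.
$$

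Combining the last two displays, dividing by $m$, and letting $m\to\infty$, the bounded additive constants $D_\psi$ and $|\varphi(0)|$ disappear while the factor $(m-1)/m$ tends to $1$, leaving precisely
$$
\OP{sg_4}(\Psi_n(\alpha^{p}))\geq \frac{|\overline{\varphi\circ\Psi_n}(\alpha)|}{C_\varphi}\,p-D_{\Psi_n}.
$$
The \emph{in particular} clause is then immediate: if $\psi=\varphi\circ\Psi_n$ is unbounded on $\langle\alpha\rangle$ then $\overline\psi(\alpha)\neq 0$, for otherwise $|\psi(\alpha^{q})|\leq D_\psi$ for every $q$; hence the right-hand side grows linearly in $p$.

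I expect the only real subtlety to be the bookkeeping of the error terms, and specifically the observation that the telescoped term $(m-1)D_{\Psi_n}$ does \emph{not} vanish after dividing by $m$ and passing to the limit — this is exactly why the quasihomomorphism defect $D_{\Psi_n}$ survives as the additive constant on the right-hand side, whereas the quasimorphism defect $D_\psi$ and the Lipschitz basepoint constant $|\varphi(0)|$ wash out. Everything else is a routine application of the triangle inequality for the norm $\OP{g_4}$ and of the standard comparison between a quasimorphism and its homogenization.
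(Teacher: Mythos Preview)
Your proposal is correct and follows essentially the same route as the paper: telescope the quasihomomorphism inequality to compare $m\,\Psi_n(\alpha^p)$ with $\Psi_n(\alpha^{pm})$, apply the Lipschitz bound for $\varphi$, divide by $m$ and pass to the limit so that only the $D_{\Psi_n}$ term survives. Your bookkeeping is in fact slightly more careful than the paper's (you track the basepoint constant $|\varphi(0)|$ and the defect $D_\psi$ explicitly before observing that they wash out), but there is no substantive difference in the argument.
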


\begin{example}\label{E:signature}
Let $\varphi \colon \OP{Conc}(\B S^3)\to \B R$ be a quasimorphism
given by the signature of a knot. It is known \cite{MR0171275} that
it is Lipschitz with respect to the four ball genus and hence
we can apply the above corollary. In this example we show that there exists a
braid $\alpha\in\B B_3$, such that for each $p\in\B N$ the knot
$\Psi_3(\alpha^p)$ is prime and the composition $\varphi\circ \Psi_3$ is
unbounded on the cyclic subgroup generated by $\alpha$. The braid $\alpha$ is
given by the following presentation
$\alpha=\sigma_1^{-4}\sigma_2\sigma_1^2\sigma_2\in \B B_3$. The fact that
$\Psi_3(\alpha^p)$ is a prime knot for each $p\in\B N$ follows from
\cite{MR542687}.
\end{example}

\subsection*{Applications to the biinvariant geometry of
the infinite braid group}
Recall that a norm $\nu$ on a group $G$ is called {\em stably
unbounded} if there exists $g\in G$ such that
$$
s\nu(g)=\lim_{p\to \infty}\frac{\nu(g^p)}{p}\neq 0.
$$
If $\psi\colon G\to \B R$ is a nontrivial homogeneous
quasimorphism which is Lipschitz with respect to $\nu$
then $\nu$ is stably unbounded. This is the usual argument
proving the stable unboundedness of a norm.

It follows from a result of Kotschick \cite{MR2509718} that the
only nontrivial homogeneous quasimorphism on the
infinite braid group is the abelianisation (up to a constant).
Moreover, the commutator length on $[\B B_{\infty},\B B_{\infty}]$
is bounded by two according to Burago, Ivanov and Polterovich
\cite[Theorem 2.2]{MR2509711}.
More precisely, they proved the following result.

\begin{theorem}[Burago-Ivanov-Polterovich]\label{T:bip}
Let  $H$ be a subgroup of a group $G$.
Suppose that for every natural number $m\in \B N$
there exists an element $g\in G$
such that the conjugate subgroups $g^iHg^{-i}$ and
$g^jHg^{-j}$ pairwise commute for $0\leq i<j\leq m$.
Then the commutator length in $G$ of every element $h\in H$
is bounded by two: $\OP{cl}_G(h)\leq 2$.
\end{theorem}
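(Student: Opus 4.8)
\emph{Setup and reduction.} Fix $h\in H$ and a representation
$$
h=\prod_{i=1}^{k}[a_i,b_i],\qquad a_i,b_i\in H,
$$
as a product of commutators of elements of $H$; this is the situation in which $\OP{cl}_G(h)$ is the relevant quantity, and it is the form in which the theorem is applied (to the perfect group $[\B B_\infty,\B B_\infty]$). The plan is to compress this $k$-fold product down to a product of only two commutators, uniformly in $k$. First I would invoke the hypothesis with $m$ chosen large in terms of $k$, obtaining an element $g\in G$ for which the conjugate subgroups $H_s:=g^sHg^{-s}$ commute elementwise for distinct indices $0\le s\le m$. The element $g$ acts by the shift $H_s\mapsto H_{s+1}$, and each $c_i:=[a_i,b_i]$ lies in $H=H_0$, so every conjugate $g^{s}c_ig^{-s}$ lies in $H_s$.

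\emph{The packing identity.} The core mechanism is the collapse of a commutator of products when the factors lie in commuting subgroups. I would place the $i$-th commutator into a distinct commuting copy via the shift, forming the two ``diagonal'' products
$$
A=\prod_{i=1}^{k} g^{\,i-1}a_i g^{-(i-1)},\qquad
B=\prod_{i=1}^{k} g^{\,i-1}b_i g^{-(i-1)} .
$$
Since entries indexed by different $i$ come from commuting subgroups $H_{i-1}$, all cross terms in $[A,B]$ cancel and the commutator of the products collapses to the product of the individual commutators:
$$
[A,B]=\prod_{i=1}^{k} g^{\,i-1}[a_i,b_i]\,g^{-(i-1)}=\prod_{i=1}^{k} g^{\,i-1}c_i\,g^{-(i-1)}=:\widetilde h .
$$
Thus the ``spread-out'' element $\widetilde h$, whose factors $g^{i-1}c_ig^{-(i-1)}$ now pairwise commute, is realized as a \emph{single} commutator.

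\emph{Telescoping back to $h$, and the obstacle.} It remains to relate $\widetilde h$ to the original ordered product $h=\prod_i c_i$, whose factors need not commute since they all sit in the single copy $H_0$. I would do this by a shift/telescoping swindle, repeatedly using that conjugation by $g$ advances the copy index and that non-adjacent copies commute, so as to rewrite the discrepancy $h^{-1}\widetilde h$ as a single commutator involving a power of $g$; this yields $h=[A,B]\cdot(\text{one commutator})$ and hence $\OP{cl}_G(h)\le 2$. The base case is transparent: for $k=2$ one computes directly $h^{-1}\widetilde h=[\,c_2^{-1},g\,]$, giving two commutators outright. The \textbf{main obstacle} is the general case: a naive diagonal placement leaves a boundary correction that is itself a product of $O(k)$ commutators rather than one, so the real work is to choose the spacing of the commuting copies and to organize the telescoping so that all boundary terms amalgamate into a single additional commutator independent of $k$. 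This is precisely the point at which the strength of the hypothesis---that arbitrarily many copies commute, for \emph{every} $m$---is indispensable, since it supplies as much ``room'' as the telescoping regrouping requires.
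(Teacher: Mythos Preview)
The paper does not contain a proof of this theorem: it is quoted as Theorem~2.2 of Burago--Ivanov--Polterovich \cite{MR2509711} and used as a black box (in Theorem~\ref{E:bip} and in the discussion preceding Corollary~\ref{C:binfty}). So there is no ``paper's own proof'' to compare against.

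As for your sketch on its own merits: the packing step is correct---placing the $i$-th commutator into the $i$-th commuting conjugate and collapsing $[A,B]$ to $\widetilde h$ is exactly the standard displacement trick, and your $k=2$ computation is right. Two remarks, though. First, you assume $h=\prod_i[a_i,b_i]$ with $a_i,b_i\in H$, i.e.\ $h\in[H,H]$; the theorem as stated only requires $h\in H\cap[G,G]$. You acknowledge this and it is indeed harmless for the applications in this paper (where $H=[\B B_n,\B B_n]$ is perfect for $n>4$), but it is an extra hypothesis relative to the statement you are proving. Second, and more substantively, you explicitly leave the general telescoping step unfinished and label it the ``main obstacle''. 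That step \emph{can} be completed: one convenient device is to work not with the individual $c_i$ but with the tail products $e_j:=c_jc_{j+1}\cdots c_k\in H$, spread these into the commuting copies, and compute a single commutator with $g$; the point is that $e_j e_{j+1}^{-1}=c_j$, so the boundary terms genuinely telescope. As written, however, your proposal is a plan rather than a proof---the decisive identity for general $k$ is asserted to exist but not produced.
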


Observe that the hypothesis of the above theorem
is satisfied by the braid groups
$\B B_n\subset \B B_{\infty}$ for
every $n\in \B N$ (see the proof of Proposition \ref{P:disp}
for a detailed argument).
This implies that the commutator length on the infinite braid group
is bounded by two.

On the other hand, the diameter of the infinite braid group
with respect to the biinvariant word metric is infinite.
To see this consider
the projection $\B B_{\infty}\to \Sigma_{\infty}$ to the
infinite symmetric group. It is Lipschitz and the cardinality
of the support of a permutation defines a conjugation invariant
norm on the symmetric group which is clearly unbounded. This implies that
the biinvariant word norm is unbounded on the infinite
braid group. The argument, however, says nothing on the
geometry of cyclic subgroups of the infinite braid group
and, in particular, it does not answer the question whether
the word norm is stably unbounded.
Our next corollary answers this question affirmatively.

\begin{corollary}\label{C:binfty}
Let $\alpha \in [\B B_{\infty},\B B_{\infty}]$. If there exists
a Lipschitz quasimorphism $\varphi\colon \OP{Conc}(\B S^3)\to \B R$
such that $\varphi(\Psi_{\infty}(\alpha))\neq 0$ then
$$
\lim_{p\to \infty}\frac{\|\alpha^p\|}{p}>0.
$$
In particular, the braid
$\sigma_1^{-4}\sigma_2\sigma_1^2\sigma_2\in [\B B_\infty,\B B_\infty]$
discussed in Example~\ref{E:signature} satisfies the above assumption
and hence the conjugation invariant word norm on $\B B_{\infty}$
is stably unbounded.
\end{corollary}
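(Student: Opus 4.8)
The plan is to trap the norm $\|\alpha^p\|$ between two functions that are linear in $p$. The upper bound will come from the Lipschitz estimate of Theorem \ref{T:lipschitz} in the infinite braid group, and the lower bound from the genus estimate of Corollary \ref{C:unboundedsg} carried out in a \emph{finite} braid group. The mechanism that lets the two be compared is that $\alpha$ lies in some finite $\B B_n$ (for the concrete braid, $n=3$), that $\Psi_\infty$ restricts to $\Psi_n$ on $\B B_n$, and that the four ball genus of $\Psi_\infty(\alpha^p)=\Psi_n(\alpha^p)$ is an invariant of the concordance class, hence the same whichever group we compute it in.

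Concretely, Theorem \ref{T:lipschitz} with $n=\infty$ gives
$$
\OP{g_4}(\Psi_\infty(\alpha^p))\leq \tfrac12\|\alpha^p\|.
$$
For the opposite direction I would work in $\B B_n$, where $\Psi_n$ is a genuine quasihomomorphism (Theorem \ref{T:main}) and hence $\varphi\circ\Psi_n$ is a genuine quasimorphism (Corollary \ref{C:compositions}), so that Corollary \ref{C:unboundedsg} applies. Using $\OP{sg_4}\leq \OP{g_4}$ together with $\Psi_\infty|_{\B B_n}=\Psi_n$ this yields
$$
\OP{g_4}(\Psi_\infty(\alpha^p))\geq \OP{sg_4}(\Psi_n(\alpha^p))\geq \frac{|(\overline{\varphi\circ\Psi_n})(\alpha)|}{C_\varphi}\,p-D_{\Psi_n}.
$$
Combining the two displays, dividing by $p$ and letting $p\to\infty$, I would obtain
$$
\lim_{p\to\infty}\frac{\|\alpha^p\|}{p}\geq \frac{2\,|(\overline{\varphi\circ\Psi_n})(\alpha)|}{C_\varphi},
$$
which is strictly positive exactly when the homogenization $\overline{\varphi\circ\Psi_n}$ does not vanish on $\alpha$, i.e. when $\varphi\circ\Psi_n$ is unbounded on $\langle\alpha\rangle$. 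This nonvanishing is the effective content of the hypothesis $\varphi(\Psi_\infty(\alpha))\neq0$ and is what must be checked in applications.

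For the final assertion I would verify the hypotheses for $\alpha=\sigma_1^{-4}\sigma_2\sigma_1^2\sigma_2\in\B B_3$. Its exponent sum is $-4+1+2+1=0$, so $\alpha$ lies in the kernel of the abelianization $\B B_3\to\B Z$, that is in $[\B B_3,\B B_3]\subset[\B B_\infty,\B B_\infty]$. The signature is a Lipschitz quasimorphism on $\OP{Conc}(\B S^3)$ by \cite{MR0171275}, and Example \ref{E:signature} shows that the signatures of the knots $\Psi_3(\alpha^p)$ are unbounded, whence $\overline{\varphi\circ\Psi_3}(\alpha)\neq0$. The estimate above then gives $\lim_p\|\alpha^p\|/p>0$, so the conjugation invariant word norm on $\B B_\infty$ is stably unbounded.

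The step I expect to require the most care is precisely this decoupling of the two estimates, and it is here that the interest of the statement resides. One cannot run the usual quasimorphism argument directly on $\B B_\infty$: by Kotschick the only homogeneous quasimorphism there is the abelianization, which kills the commutator subgroup, and $\varphi\circ\Psi_\infty$ is not even a quasimorphism because $\Psi_\infty$ is not a quasihomomorphism (the Remark after Theorem \ref{T:main}). The resolution is to extract the linear lower bound on the genus in the finite group $\B B_n$, where $\varphi\circ\Psi_n$ has finite defect, while taking the Lipschitz upper bound in $\B B_\infty$, where the word norm is smaller; the two are legitimately compared only because the genus of the concordance class $\Psi_\infty(\alpha^p)$ is insensitive to the ambient braid group.
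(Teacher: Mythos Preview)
Your proof is correct and follows essentially the same chain of inequalities as the paper: the lower bound from Corollary~\ref{C:unboundedsg} applied in a finite $\B B_n$, the equality $\Psi_n=\Psi_\infty|_{\B B_n}$ from Proposition~\ref{P:inclusion}, and the Lipschitz upper bound from Theorem~\ref{T:lipschitz} in $\B B_\infty$. Your observation that what the argument actually requires is $\overline{\varphi\circ\Psi_n}(\alpha)\neq 0$ rather than the literal hypothesis $\varphi(\Psi_\infty(\alpha))\neq 0$ is correct and is a point the paper's own proof glosses over; for the concrete braid this is exactly what Example~\ref{E:signature} verifies.
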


Burago, Ivanov and Polterovich posed several
problems about existence of groups with certain metric properties
\cite{MR2509711}.
One of them asks if there exists a group $G$ with the following
properties:
\begin{enumerate}[leftmargin=*]
\item
$G$ has finite abelianisation,
\item
the commutator length of $G$ is stably trivial,
\item
$G$ admits a stably unbounded conjugation invariant norm.
\end{enumerate}

The infinite braid group satisfies the last two conditions
of the above problem but its abelianisation is infinite cyclic.
We have, however, the following observation.

\begin{theorem}\label{E:bip}
The commutator subgroup $[\B B_{\infty},\B B_{\infty}]$ of
the infinite braid group satisfies the conditions of the
above problem.
\end{theorem}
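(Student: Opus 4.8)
The plan is to verify the three conditions (1)--(3) from the Burago--Ivanov--Polterovich problem for $G=[\B B_{\infty},\B B_{\infty}]$ one at a time, disposing of (1) and (3) quickly and concentrating the real work on (2).

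For condition (1), I would argue that $G$ is in fact perfect, so that its abelianisation is trivial and in particular finite. Since $\B B_{\infty}=\bigcup_n\B B_n$ we have $G=\bigcup_n[\B B_n,\B B_n]$, and every element of $G$ lies in some $[\B B_n,\B B_n]$ with $n\geq 5$ (if it lies in $[\B B_N,\B B_N]$ with $N<5$ it also lies in $[\B B_5,\B B_5]$). The commutator subgroups $[\B B_n,\B B_n]$ are perfect for $n\geq 5$, so each such element is a product of commutators of elements of $[\B B_n,\B B_n]\subseteq G$; hence $G=[G,G]$.

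For condition (3), I would take $\nu$ to be the restriction to $G$ of the conjugation invariant word norm $\|\cdot\|$ on $\B B_{\infty}$. As $\|\cdot\|$ is invariant under conjugation by all of $\B B_{\infty}\supseteq G$, its restriction is a conjugation invariant norm on $G$ taking the same numerical values as $\|\cdot\|$ on elements of $G$. The braid $\alpha=\sigma_1^{-4}\sigma_2\sigma_1^2\sigma_2$ of Example \ref{E:signature} lies in $G$ and, by Corollary \ref{C:binfty}, satisfies $\lim_{p\to\infty}\|\alpha^p\|/p>0$; hence $s\nu(\alpha)>0$ and $\nu$ is stably unbounded. Condition (2) is the heart of the matter: I must show the commutator length of $G$ is stably trivial, and I would do so by proving it is uniformly bounded. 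The strategy is to apply the Burago--Ivanov--Polterovich Theorem \ref{T:bip} with the ambient group equal to $G$ itself and with $H=[\B B_n,\B B_n]\subseteq G$. Given $m\in\B N$, I would produce a braid $g\in G$ such that the conjugates $g^iHg^{-i}$ for $0\leq i\leq m$ are supported on pairwise disjoint blocks of $n$ consecutive strands and hence pairwise commute, exactly as in the proof of Proposition \ref{P:disp}. The new constraint is that here $g$ must lie in $G$, i.e. have zero writhe, since the abelianisation $\B B_{\infty}\to\B Z$ is the exponent sum. I would therefore take $g$ to be a block-shift cyclically permuting the first $m+1$ blocks of $n$ strands and then correct its writhe by multiplying by a power of a generator $\sigma_j$ supported on strands beyond $(m+1)n$; this correction commutes with both $H$ and the shift, so it alters neither $g^iHg^{-i}$ nor the commuting property while forcing the total exponent sum to vanish. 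Theorem \ref{T:bip} then gives $\OP{cl}_G(h)\leq 2$ for every $h\in[\B B_n,\B B_n]$, and since every element of $G$ lies in some $[\B B_n,\B B_n]$ we conclude $\OP{cl}_G\leq 2$ on all of $G$, whence $\OP{scl}_G\equiv 0$.

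The main obstacle is precisely the zero-writhe requirement in step (2): the commuting-conjugates construction of Proposition \ref{P:disp} naturally produces a shifting braid of nonzero exponent sum, so the argument only works once one checks that this writhe can be absorbed by a far-away correction without disturbing the disjoint-support structure of the conjugated copies. Everything else reduces to bookkeeping with disjoint blocks of strands, together with the already-established perfectness input for (1) and the stable unboundedness supplied by Corollary \ref{C:binfty} for (3).
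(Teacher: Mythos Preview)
Your argument is correct and follows the paper's overall plan: verify (1) via perfectness of $[\B B_n,\B B_n]$ for $n\geq 5$, verify (3) by restricting the biinvariant word norm and invoking Corollary~\ref{C:binfty}, and verify (2) by checking the hypothesis of Theorem~\ref{T:bip} for each $H=[\B B_n,\B B_n]$ inside $G=[\B B_\infty,\B B_\infty]$.

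The only place your route diverges is in how you produce the displacing element $g\in G$ for (2). The paper simply cites Proposition~\ref{P:disp}, whose proof builds a displacing braid directly inside the commutator subgroup by using \emph{alternating} argyle blocks $A'_{n,i}$ (the signs cancel, so $\Delta=A'_{n,1}\cdots A'_{n,m-1}\in[\B B_{mn},\B B_{mn}]$ from the outset). You instead take an arbitrary block shift $g_0$ and kill its exponent sum by multiplying by a power of a generator $\sigma_j$ supported far to the right; since $\sigma_j$ commutes with both $H$ and $g_0$, the conjugates $g^iHg^{-i}=g_0^iHg_0^{-i}$ are unchanged and the disjoint-support argument goes through. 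This is a legitimate and arguably slicker alternative: it avoids the explicit combinatorics of the alternating argyle pattern and makes transparent that the only obstruction is abelianisation, which can always be corrected in $\B B_\infty$ without disturbing any finite configuration. The paper's construction, on the other hand, has the virtue of staying inside $\B B_{mn}$ rather than needing extra strands.
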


\begin{proof}
Observe that the commutator subgroup $[\B B_{\infty},\B B_{\infty}]$
is the union of the commutator subgroups $[\B B_n,\B B_n]$ of
the braid group on finitely many strings.
Let us justify that the group $[\B B_{\infty},\B B_{\infty}]$
satisfies the properties of the above problem.
\begin{enumerate}[leftmargin=*]
\item
It is known \cite{MR0251712} that the commutator subgroup
$[\B B_n,\B B_n]$ of the braid group is perfect for $n>4$.
This implies that the group $[\B B_{\infty},\B B_{\infty}]$
is perfect as well. Equivalently, its abelianisation is
trivial.
\item
Observe that the subgroups
$[\B B_{n},\B B_{n}]\subset [\B B_{\infty},\B B_{\infty}]$ satisfy the
assumption of Theorem \ref{T:bip} (Proposition \ref{P:disp}).
This implies that the
commutator length is bounded by two and, in particular, it
is stably trivial.
\item
The restriction of the conjugation invariant word norm from the
whole infinite braid group to its commutator subgroup is
stably unbounded due to Corollary \ref{C:binfty}.
\end{enumerate}
\end{proof}

\subsection*{Acknowledgements}
We would like to thank Steve Boyer, Ana Garcia Lecuona, Paolo Lisca and Brendan Owens for
useful conversations, and Micha\l\ Marcinkowski for simplifying our proof
of Corollary \ref{C:scl}.

First author was partially supported by the CRM-ISM fellowship. He would like
to thank CRM-ISM Montreal for the support and great research atmosphere. Part
of this work has been done during the authors stay at Max Planck Institute for
Mathematics in Bonn. We wish to express our gratitude to the Institute for the
support and excellent working conditions.

\section{Preliminaries}\label{S:preliminaries}
\subsection{A norm on a group}
Let $\nu\colon G\to \B R$ be a function. It is called a {\em pseudonorm}
if it satisfies the following conditions for all $g,h\in G$:
\begin{enumerate}
\item
$\nu(g)\geq 0$
\item
$\nu(g)=\nu(g^{-1})$
\item
$\nu(gh)\leq \nu(g) + \nu(h)$
\end{enumerate}
If, in addition, $\nu(g)=0$ if and only if $g=1_G$ then
$\nu$ is called a norm. If $\nu(ghg^{-1})=\nu(h)$ then
$\nu$ is called conjugation invariant.

\begin{remark*}
If $G$ is an abelian group then a norm is often required to be homogeneous.
That is, $\nu(ng)=|n|\nu(g)$ for all $g\in G$ and all integers $n\in \B Z$.
We do not make this requirement here.
\end{remark*}

The stabilization of $\nu$ is defined by
$$
s\nu(g):=\lim_{k\to \infty}\frac{\nu\left(g^k\right)}{k}.
$$
The stabilization of a norm does not have to be
a pseudonorm. Both the nontriviality and the triangle
inequality can be violated. If $G$ is abelian, however,
then the stabilization of a norm is a pseudonorm.
A norm $\nu$ is called {\em stably unbounded} if there exists
$g\in G$ such that $s\nu(g)\neq 0$.

\subsection{The biinvariant word norm}
Let $G$ be a normally finitely generated group. This
means that there exists a finite symmetric set
$S\subset G$ such that its normal closure $\overline S$
generates $G$. We also say that $S$ normally generates $G$.
The associated word norm is defined by
$$
\|g\|:=\min\{k\in \B N\,|\, g=s_1...s_k,
\text{ where } s_i\in \overline{S}\}.
$$
This norm is, by definition, conjugation invariant and hence
the induced metric, defined by $d_S(g,h):=\|gh^{-1}\|$ is
biinvariant. The standard argument shows that
any homomorphism $G\to H$ is Lipschitz with respect
to $d_S$ and any biinvariant metric on $H$.
In particular, the Lipschitz
class of this metric does not depend on the choice of
a finite set normally generating $G$.

\subsection{The commutator length}\label{SS:cl}
Let $g\in [G,G]$. Its commutator length is defined by
$$
\OP{cl}(g)
:=\min\{k\in \B N\,|\,g=[a_1,b_1]\dots [a_k,b_k],\,\text{where } a_i,b_i\in G\}.
$$
This quantity has been extensively studied and we refer the reader
to Calegari's book \cite{MR2527432} for more information. The stable commutator
length of an element $g$ is denoted by $\OP{scl}(g)$.
Let us explain that for a braid group $\B B_n$
the vanishing of the stable commutator
length is equivalent to the vanishing of the stabilization
of the biinvariant word norm.

It is known that braid groups satisfy the bq-dichotomy,
see \cite[Theorem 5.E]{bgkm}. This means that
for every element $\alpha \in \B B_n$ the cyclic subgroup
$\langle \alpha \rangle$ is either biinvariantly bounded or
there exists a homogeneous quasimorphism $q\colon G\to\B R$ such that
$q(\alpha)\neq 0$.
If $\alpha \in [\B B_n,\B B_n]$ then the bq-dichotomy implies that
the stable commutator length is trivial if and only if $\alpha$
generates a bounded cyclic subgroup. Consequently
the stable commutator length of $\alpha$ is trivial if and only
if the stable biinvariant word norm of $\alpha$ is trivial.

On the other hand, the commutator subgroup of the
infinite braid group contains
undistorted elements, according to Corollary \ref{C:binfty}. Since
the stable commutator length is trivial on $\B B_{\infty}$ these
elements are not detected by a quasimorphism. Thus the infinite braid
group does not satisfy the bq-dichotomy.

\subsection{The four ball genus norm on the concordance group}
Let $\OP{Conc(\B S^3)}$ denote the abelian group of smooth concordance classes
of knots in $\B S^3$. Two knots $K_0,K_1\in \B S^3=\partial \B B^4$
are {\em concordant} if there exists a smooth embedding
$c\colon\B S^1\times [0,1]\to \B B^4$ such that
$c(\B S^1\times \{0\})=K_0$ and $c(\B S^1\times \{1\})=K_1$.
The knot is called {\em slice} if it is concordant to the unknot.
The addition in $\OP{Conc}(\B S^3)$ is defined by the connected
sum of knots. The inverse of an element $[K]\in \OP{Conc}(\B S^3)$ is
represented by the knot $-K^*$, where $-K^*$ denotes the mirror image of the
knot $K$ with the reversed orientation. This group is equipped with a norm
defined by the four ball genus. More precisely,
$\OP{g}_4[K]$ is the minimal genus of an embedded surface in $\B B^4$
bounded by the knot $K$. We will call it the
{\em four ball genus norm}.
Its stabilization is denoted by
$\OP{sg_4}[K]$. For more information about the group $\OP{Conc(\B S^3)}$ see \cite{MR2179265}.

\subsection{The knot closure of a braid and the definition of $\Psi_n$}
\label{SS:Psin}
Let $\B B_n$ be the braid group on $n$-strings and let
$\sigma_1,\ldots,\sigma_{n-1}$ denote the standard
Artin generators.
We are interested in closures of braids in $\B S^3$.
In general, the closure of a braid has many components. In this section we describe the procedure
which produces a knot from a braid.
The closure of a braid $\alpha$ will be denoted either
by $\OP{closure}(\alpha)$ or by $\widehat{\alpha}$.

Let us introduce some notation.
Let $(\phantom{a})\colon \B B_n\to \Sigma_n$ be the projection onto the
symmetric group on $n$ letters. The elements $(\sigma_i)$ are then
the transpositions $(i,i+1)$. Let $\iota_n\colon \B B_n\to \B B_{n+1}$
denote the inclusion onto the first $n$ strands.

Let $\B B_n^K$ denote the set of braids on $n$ strands consisting
of braids whose closures are knots. It is a conjugation
invariant set and it is the preimage of the set of
the longest cycles with respect to the projection to
the symmetric group. We define a projection
$\pi_n\colon \B B_n\to \B B_n^K$ as follows.

Given a braid $\alpha \in \B B_n$
we construct a braid $\sigma_{(\alpha)}$
depending only on the permutation $(\alpha)\in \Sigma_n$
induced by $\alpha$ such that the composed braid
$\alpha\sigma_{(\alpha)}$ induces a longest cycle.
More precisely, let
$$
(\alpha) = (a_{1,1}\ldots a_{1,n_1})(a_{2,1}\ldots a_{2,n_2})\dots
(a_{k,1}\ldots a_{k,n_k})
$$
be presented as a cycle ordered lexicographically. We also require that
$\sum_{i=1}^k n_i=~n$, that is, we list cycles of length one.
Then we define
$$
\sigma_{(\alpha)} := \sigma_{a_{2,1}-1}\sigma_{a_{3,1}-1}\dots \sigma_{a_{k,1}-1}.
$$
The permutation induced by $\alpha\sigma_{(\alpha)}$ is then
the longest cycle obtained inductively by inserting the
second cycle to the first one,
the third cycle into the resulting cycle and so on.

A geometric description of the procedure goes as follows.
Consider the closure of $\alpha$ and color the component
containing the first strand red. Move to the left and if
the $i$-th strand is not red then multiply $\alpha$ by
$\sigma_{i-1}$, extend the coloring and continue the
procedure. The following properties are clear directly
from the construction:
\begin{itemize}
\item
The closure of $\alpha \sigma_{(\alpha)}$ is a knot.
\item
The braid $\sigma_{(\alpha)}$ is a product of $C_{\alpha}-1$
transpositions, where $C_{\alpha}$ is the number of components
of the closure of $\alpha$.
\item
The closure of $\sigma_{(\alpha)}$ is a trivial link.
\item
If $\alpha \in \B B_n^K$ then $\sigma_{(\alpha)}$ is the
identity; in other words $\alpha\mapsto \alpha\sigma_{(\alpha)}$
defines a projection $\pi_n\colon \B B_n\to \B B_n^K$.
\end{itemize}

Next we
define a map $\Psi_n\colon \B B_n\to \OP{Conc}(\B S^3)$ to be
the composition of the projection $\pi_n$ followed by the
closure of a braid and taking the concordance class:
$$
\Psi_n(\alpha) = [\OP{closure}(\pi_n(\alpha))]=
[\widehat{\alpha\sigma_{(\alpha)}}].
$$
Now the proof of the following observation is
straightforward.

\begin{proposition}\label{P:inclusion}
Let $\iota_n \colon \B B_n\to \B B_{n+1}$ be the inclusion onto
the first $n$ strings. The following diagram is commutative
$$
\xymatrix
{
\B B_n     \ar[r]^{\pi_n} \ar[d]^{\iota_n} &
\B B_n^K \ar[drr]^{\OP{closure}}\ar[d]^{\iota_n(-) \sigma_n} & \\
\B B_{n+1} \ar[r]^{\pi_{n+1}} & \B B_{n+1}^K \ar[rr]^{\OP{closure}} &
& {\bf KNOTS}\ar[r]^{[\phantom{a}]} & \OP{Conc}(\B S^3)\\
}
$$
Consequently, $\Psi_{n+1}\circ \iota_n = \Psi_n$ and
the map $\Psi_{\infty}\colon \B B_{\infty}\to \OP{Conc}(\B S^3)$
is well defined and surjective.\qed
\end{proposition}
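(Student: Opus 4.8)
The plan is to verify commutativity of the two regions of the diagram separately---the left-hand square and the right-hand triangle built from the closure maps---and then read off the two consequences. Write $\iota_n\colon \B B_n\to\B B_{n+1}$ for the inclusion; on permutations it sends $(\alpha)\in\Sigma_n$ to the same permutation regarded in $\Sigma_{n+1}$ with $n+1$ adjoined as a fixed point. Throughout I would use the defining identities $\pi_n(\alpha)=\alpha\sigma_{(\alpha)}$ and $\Psi_n(\alpha)=[\widehat{\pi_n(\alpha)}]$.

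For the left square I would establish the combinatorial identity
$$
\sigma_{(\iota_n(\alpha))}=\iota_n(\sigma_{(\alpha)})\,\sigma_n
\qquad\text{for all }\alpha\in\B B_n.
$$
This rests entirely on how the lexicographic cycle decomposition behaves under $\iota_n$: passing from $(\alpha)$ to $(\iota_n(\alpha))$ appends the singleton cycle $(n+1)$, and since $n+1$ exceeds every other entry this singleton is listed strictly last. Hence the list of initial (smallest) cycle entries $a_{2,1},\dots,a_{k,1}$ gains exactly one new term, namely $n+1$, at the end, so the word defining $\sigma_{(\iota_n(\alpha))}$ is the word defining $\sigma_{(\alpha)}$ with one extra final factor $\sigma_{(n+1)-1}=\sigma_n$. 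Because every generator occurring in $\sigma_{(\alpha)}$ has index at most $n-1$, applying $\iota_n$ leaves that word unchanged, which yields the displayed identity. Multiplying by $\iota_n(\alpha)$ on the left gives $\pi_{n+1}(\iota_n(\alpha))=\iota_n(\alpha)\iota_n(\sigma_{(\alpha)})\sigma_n=\iota_n(\pi_n(\alpha))\sigma_n$, precisely commutativity of the left square. (One also checks that the vertical map $\beta\mapsto\iota_n(\beta)\sigma_n$ does land in $\B B_{n+1}^K$, since multiplying the $n$-cycle $(\beta)$ by the transposition $(n,n+1)$ produces an $(n+1)$-cycle.)

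For the right triangle I would invoke Markov stabilization: for $\beta\in\B B_n^K$ the closure $\widehat{\iota_n(\beta)\sigma_n}$ arises from $\widehat{\beta}$ by a single positive stabilization, hence is isotopic, and so equal in ${\bf KNOTS}$, to $\widehat{\beta}$. Combining the two regions gives
$$
\Psi_{n+1}(\iota_n(\alpha))=[\widehat{\pi_{n+1}(\iota_n(\alpha))}]
=[\widehat{\iota_n(\pi_n(\alpha))\sigma_n}]
=[\widehat{\pi_n(\alpha)}]=\Psi_n(\alpha),
$$
so $\Psi_{n+1}\circ\iota_n=\Psi_n$. By induction this makes the family $(\Psi_n)$ compatible with all inclusions $\B B_n\hookrightarrow\B B_m$, so the assignment $\Psi_\infty([\alpha]):=\Psi_n(\alpha)$ is well defined on the direct limit $\B B_\infty=\varinjlim\B B_n$. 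Surjectivity is then immediate: by Alexander's theorem every knot is the closure of a braid on some number of strands, and Theorem \ref{T:main} shows $\Psi_n$ already attains every concordance class represented by the closure of an $n$-braid, so the union of the images exhausts $\OP{Conc}(\B S^3)$.

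I expect the only genuinely delicate point to be the bookkeeping in the combinatorial identity for $\sigma_{(\cdot)}$: one must be sure that adjoining the new fixed point does nothing more than append a single $\sigma_n$, which hinges both on the lexicographic ordering placing the singleton $(n+1)$ strictly last and on no index of $\sigma_{(\alpha)}$ being shifted by $\iota_n$. The Markov-move step and the limit and surjectivity arguments are routine once this identity is in hand.
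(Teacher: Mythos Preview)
Your argument is correct and is exactly the verification the paper has in mind; the authors simply declare the proposition ``straightforward'' and omit the proof, so your careful bookkeeping on the identity $\sigma_{(\iota_n(\alpha))}=\iota_n(\sigma_{(\alpha)})\sigma_n$ together with the Markov stabilization step fills in precisely the details they suppress.
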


\begin{remark*}
The restriction of the map $\Psi_{\infty}$ to the commutator
subgroup $[\B B_{\infty},\B B_{\infty}]$ is surjective.
To see this let $K=\Psi_n(\alpha)$ where $\alpha\in \B B_n^K$.
Suppose that $\OP{Ab}(\alpha)=k$, where $\OP{Ab}\colon \B B_n\to \B Z$
is the abelianisation homomorphism. Observe that the closure of
the braid $\iota(\alpha)\sigma_n^{-1}\ldots\sigma_{n+k-1}^{-1}$ is equal to $K$ and
that this braid belongs to the commutator subgroup
$[\B B_{n+k},\B B_{n+k}]$.
\end{remark*}

\section{Proofs }\label{S:proofs}

\subsection{General facts about quasihomomorphisms}
The following lemma will be used in the proof of Corollary \ref{C:lipschitz-bi}.
Observe that the inequalities in the first part have a particularly simple form
if $\psi(\OP{1}_G)=~0$. For $\alpha,\beta\in G$ we denote by
$\alpha^{\beta}:=\beta\alpha\beta^{-1}$.
\begin{lemma}\label{L:qm}
Let $A$ be an abelian group equipped with a pseudonorm $\nu$ and let
$\psi\colon G\to A$ be a quasihomomorphism.
\begin{enumerate}
\item
The following inequalities hold for every $\alpha,\beta\in G$:
\begin{itemize}
\item
$\nu(\psi(\beta)+\psi(\beta^{-1}))\leq \nu(\psi(1_G))+D_{\psi}$
\item
$\nu(\psi(\alpha^{\beta})-\psi(\alpha))\leq \nu(\psi(\OP{1}_G)) + 3D_{\psi}$
\item
$\nu(\psi([\alpha,\beta]))\leq 2\nu(\psi(\OP{1}_G))+5 D_{\psi}$.
\end{itemize}
\item
If $\psi$ is
bounded on a set $S$ normally generating $G$ then it is Lipschitz
with respect to the biinvariant word metric on $G$.
In particular, $\psi$ is Lipschitz if $G$ is normally finitely generated.
\item
The restriction of $\psi$ to the commutator subgroup $[G,G]$
is Lipschitz with respect to the commutator length.
\end{enumerate}
\end{lemma}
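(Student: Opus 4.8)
The plan is to handle the three parts in order, bootstrapping each estimate from the previous ones; everything follows by chaining the single defining inequality, which I rewrite in the additive form $\psi(gh)=\psi(g)+\psi(h)-\epsilon(g,h)$ with $\nu(\epsilon(g,h))\le D_\psi$, valid for all $g,h\in G$ because $A$ is abelian. For Part (1) I would first take $h=\beta^{-1}$, so that $\psi(1_G)=\psi(\beta)+\psi(\beta^{-1})-\epsilon$; the triangle inequality for $\nu$ then gives the first bound $\nu(\psi(\beta)+\psi(\beta^{-1}))\le\nu(\psi(1_G))+D_\psi$. For the conjugation bound I would expand $\psi(\beta\alpha\beta^{-1})$ by applying the additive form twice (first splitting off $\beta^{-1}$, then splitting $\beta\alpha$), which yields
$$\psi(\alpha^\beta)-\psi(\alpha)=\bigl(\psi(\beta)+\psi(\beta^{-1})\bigr)-\epsilon_1-\epsilon_2,$$
so the first bound together with $\nu(\epsilon_i)\le D_\psi$ gives the stated $\nu(\psi(1_G))+3D_\psi$. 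Finally, writing $[\alpha,\beta]=(\alpha\beta\alpha^{-1})\beta^{-1}=\beta^\alpha\beta^{-1}$ and splitting once, then replacing $\psi(\beta^\alpha)$ by $\psi(\beta)$ at the cost of the conjugation bound, expresses $\psi([\alpha,\beta])$ as $\bigl(\psi(\beta)+\psi(\beta^{-1})\bigr)$ plus error terms bounded by $\nu(\psi(1_G))+3D_\psi$ and by $D_\psi$, which with the first bound totals $2\nu(\psi(1_G))+5D_\psi$. Here I would note that $\nu(\psi(1_G))\le D_\psi$ follows from the defining inequality at $g=h=1_G$, which is what makes the simplified constants in the preceding remark correct when $\psi(1_G)=0$.

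For Part (2) the first observation is that boundedness on $S$ propagates to its normal closure $\overline S$ via the conjugation bound: if $\nu(\psi(s))\le M$ for $s\in S$, then $\nu(\psi(s^\beta))\le M+\nu(\psi(1_G))+3D_\psi=:M'$ for every conjugate $s^\beta\in\overline S$. Then for $g=s_1\cdots s_k$ with $s_i\in\overline S$ and $k=\|g\|$, telescoping the additive form $k-1$ times gives $\psi(g)=\sum_i\psi(s_i)-\sum_j\epsilon_j$, hence $\nu(\psi(g))\le kM'+(k-1)D_\psi\le(M'+D_\psi)\|g\|$. To upgrade this to a genuine Lipschitz estimate for differences I would write $\psi(g)-\psi(h)=\psi(gh^{-1})-\epsilon$, so that $\nu(\psi(g)-\psi(h))\le\nu(\psi(gh^{-1}))+D_\psi\le(M'+D_\psi)\|gh^{-1}\|+D_\psi$; since $\|gh^{-1}\|\ge1$ whenever $g\ne h$ (and the bound is trivial when $g=h$), the additive $D_\psi$ is absorbed into the multiplicative constant, giving Lipschitzness with respect to $d_S$. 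The final clause is then immediate, since a finite normally generating set is automatically one on which $\psi$ is bounded.

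Part (3) is the same telescoping argument applied to a shortest expression $g=[a_1,b_1]\cdots[a_k,b_k]$ with $k=\OP{cl}(g)$: splitting off one commutator at a time and bounding each factor $\nu(\psi([a_i,b_i]))$ by the third bound of Part (1) yields $\nu(\psi(g))\le k\bigl(2\nu(\psi(1_G))+5D_\psi\bigr)+(k-1)D_\psi\le\bigl(2\nu(\psi(1_G))+6D_\psi\bigr)\OP{cl}(g)$, and the difference trick from Part (2) again upgrades this to Lipschitzness with respect to the commutator length. When $\psi(1_G)=0$ the constant becomes $6D_\psi$, which is exactly the constant used in Corollary \ref{C:lipschitz-bi}.

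I do not expect a serious obstacle: the content is entirely the formal "quasihomomorphism algebra" of iterating the defining inequality. The only two points requiring genuine care are (i) extending a bound from $S$ to its normal closure $\overline S$ using the almost-conjugation-invariance supplied by Part (1), since the biinvariant word norm is built from conjugates rather than from $S$ itself, and (ii) the observation that the additive defect terms can be absorbed into the Lipschitz constant precisely because both the word norm and the commutator length are at least $1$ away from the identity.
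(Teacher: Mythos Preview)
Your proposal is correct and follows essentially the same approach as the paper. The paper's proof is terser---for Part~(1) it simply says ``all inequalities follow directly from the quasihomomorphism property,'' and for Parts~(2) and~(3) it carries out exactly the telescoping you describe, arriving at the same constants $(C+D_\psi)$ and $2\nu(\psi(1_G))+6D_\psi$; your additional ``difference trick'' and the remark that $\nu(\psi(1_G))\le D_\psi$ are correct but go slightly beyond what the paper records, since in this paper ``Lipschitz'' is used in the sense $\nu(\psi(\alpha))\le C\|\alpha\|$ rather than a two-point estimate.
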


\begin{proof}
\begin{enumerate}[leftmargin=*]
\item
All inequalities follow directly from the quasihomomorphism property.
\item
Let $\alpha=s_1^{\beta_1}\ldots s_k^{\beta_k}$, where $s_i\in S$.
According to the hypothesis $\psi $ is bounded on $S$. It follows
from the previous part that $\psi $ is bounded, say by $C\geq 0$,
on the normal closure of $S$. We have
\begin{eqnarray*}
\nu(\psi(\alpha))&=&\nu\left(\psi\left(s_1^{\beta_1}\ldots s_{k}^{\beta_{k}}\right)\right)\\
&\leq& \nu\left (\sum_{i=1}^k \psi(s_i^{\beta_i})\right ) + (k-1)D_{\psi}\\
&\leq& \sum_{i=1}^k \nu \left( \psi(s_i^{\beta_i})\right ) + (k-1)D_{\psi}\\
&\leq& (C+D_{\psi})k,
\end{eqnarray*}
and the statement follows.
\item
The last inequality of item $(1)$ shows that $\psi$ is bounded on commutators.
This implies that if $\alpha\in [G,G]$ is a product of $k$ commutators then
$$
\nu(\psi(\alpha)) \leq k(2\nu(\psi(\OP{1_G}))+5D_{\psi}) +
(k-1)D_{\psi} \leq (2\psi(\OP{1_G})+6D_{\psi})k.
$$
Thus the Lipschitz constant of the restriction of $\psi$ to the commutator
subgroup with respect to the commutator length is bounded by $2\psi(\OP{1_G})+6D_{\psi}$.
\end{enumerate}
\end{proof}

\subsection{Proof of the first results and basic consequences}
Recall that given two knots $K$ and $K'$ we denoted by $-K$ the knot $K$ with
the reversed orientation, by $K^*$ the knot which is the mirror image of the
knot $K$, and by $K\#K'$ the connected sum of $K$ and $K'$. In \cite[Lemma
2.7]{MR2851716} the first author proved the following lemma (we reproduce the
proof for completeness).

\begin{lemma}\label{L:bordism}
Let $\alpha,\beta\in \B B_n$. There exists a smooth
bordism $\Sigma\to \B B^4$ between the knots
$$
\widehat{\alpha\sigma_{(\alpha)}}\#\widehat{\beta\sigma_{(\beta)}}\#
-(\widehat{\alpha\beta\sigma_{(\alpha\beta)}})^*\quad \text{ and }\quad
\widehat{\alpha\beta\sigma_{(\alpha\beta)}}\#
-(\widehat{\alpha\beta\sigma_{(\alpha\beta)}})^*
$$
such that $\chi(\Sigma)\geq-6n$.
\end{lemma}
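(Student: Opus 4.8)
The plan is to realize the required bordism as a movie of oriented band moves (saddles) between braid closures and to bound the number of bands by $6n$; since each band contributes $-1$ to the Euler characteristic, this yields $\chi(\Sigma)\geq -6n$. Write $\gamma_1=\alpha\sigma_{(\alpha)}$, $\gamma_2=\beta\sigma_{(\beta)}$, $\gamma_3=\alpha\beta\sigma_{(\alpha\beta)}$ and $M=-(\widehat{\gamma_3})^*$, so that the two knots in the statement are $\widehat{\gamma_1}\#\widehat{\gamma_2}\#M$ and $\widehat{\gamma_3}\#M$. The summand $M$ appears in both, so I would first construct a cobordism $\Sigma_0$ from $\widehat{\gamma_1}\#\widehat{\gamma_2}$ to $\widehat{\gamma_3}$ that is supported away from a fixed arc, and then carry $M$ along as a product annulus glued in along the connected-sum circle. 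This alters the Euler characteristic by at most one band and reduces the problem to estimating $\chi(\Sigma_0)$.

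Two elementary cobordisms serve as building blocks. First, the oriented resolution of a single Artin generator $\sigma_i^{\pm 1}$ replaces it locally by the trivial braid on those two strands and is realized by a single saddle; thus inserting or deleting one crossing in a braid word costs one band. Second, there is a merge cobordism from $\widehat{\gamma_1}\#\widehat{\gamma_2}$ to the closure $\widehat{\gamma_1\gamma_2}$ of the stacked braid: one band converts the connected sum into the split union $\widehat{\gamma_1}\sqcup\widehat{\gamma_2}$, and then $n$ oriented saddles, each joining a closure arc to the adjacent through-strand (which carry opposite orientations), reconnect the two diagrams into the single closure of $\gamma_1\gamma_2$. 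With these in hand I would run the movie
\[
\widehat{\gamma_1}\#\widehat{\gamma_2}\rightsquigarrow \widehat{\gamma_1\gamma_2}=\widehat{\alpha\sigma_{(\alpha)}\beta\sigma_{(\beta)}}\rightsquigarrow \widehat{\alpha\beta}\rightsquigarrow \widehat{\alpha\beta\sigma_{(\alpha\beta)}}=\widehat{\gamma_3},
\]
where the middle steps delete the generators of $\sigma_{(\beta)}$, then use the cyclic invariance $\widehat{xy}=\widehat{yx}$ of the closure (a $\chi$-free isotopy) to expose and delete the generators of $\sigma_{(\alpha)}$, and finally insert the generators of $\sigma_{(\alpha\beta)}$.

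It remains to count the bands. The merge step uses at most $n+1$ bands. Each of $\sigma_{(\alpha)}$, $\sigma_{(\beta)}$, $\sigma_{(\alpha\beta)}$ is a product of $C-1$ transpositions, where $C\leq n$ is the number of components of the relevant closure, so the deletion and insertion steps together use at most $3(n-1)$ bands. Hence $\Sigma_0$ is built from at most $4n-2\leq 6n$ bands, and since any auxiliary death moves (capping off stray unknotted components) only raise $\chi$, we get $\chi(\Sigma_0)\geq -6n$; carrying $M$ along preserves this bound. I expect the main obstacle to be the merge cobordism: one must verify that the reconnecting saddles can genuinely be chosen orientation-compatible, so that $\Sigma$ is an oriented bordism between the specified oriented knots with the reversal and mirror conventions on $M$, and that the movie assembles into an honest surface in $\B B^4$ with the claimed boundary. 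The single-crossing resolutions and the bookkeeping of components are routine by comparison.
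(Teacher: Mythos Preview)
Your argument is correct and in fact yields a sharper bound than the paper's. Both proofs rest on the same elementary move---an oriented saddle contributes $-1$ to the Euler characteristic---but they organise the movie differently. The paper goes through disjoint closures: it first splits $\widehat{\alpha\beta\sigma_{(\alpha\beta)}}$ into $\widehat{\alpha}\sqcup\widehat{\beta}\sqcup\widehat{\sigma_{(\alpha\beta)}}$ using $2n$ bands, then passes between the trivial links $\widehat{\sigma_{(\alpha\beta)}}$ and $\widehat{\sigma_{(\alpha)}}\sqcup\widehat{\sigma_{(\beta)}}$ (bounded by $2n-1$ bands since these are unlinks of at most $n$ components), then reassembles $\widehat{\alpha\sigma_{(\alpha)}}\sqcup\widehat{\beta\sigma_{(\beta)}}$ with another $2n$ bands, and finally performs one connected-sum band; the $M$ summand is carried along exactly as you describe. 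Your route instead stacks the braids and resolves or inserts individual generators: $\widehat{\gamma_1}\#\widehat{\gamma_2}\to\widehat{\gamma_1\gamma_2}$ costs $n+1$ bands, and the three words $\sigma_{(\alpha)},\sigma_{(\beta)},\sigma_{(\alpha\beta)}$ together contribute at most $3(n-1)$ crossing changes, for a total of $4n-2$. This is more economical and conceptually a little cleaner, since you never need to manipulate the auxiliary trivial links as separate objects. The orientation check you flag is indeed routine: at a crossing both strands run in the same direction, and at the merge the closure arcs of $\widehat{\gamma_1}$ and $\widehat{\gamma_2}$ are parallel, so every saddle is oriented.
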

\begin{proof}
The proof relies on the observation that if a link $L$ is obtained
from a link $L'$ by the operation presented in Figure \ref{F:saddle-move}
then there is an oriented bordism between $L$ and $L'$ of Euler characteristic
equal to~$-1$.
\begin{figure}[htb]
\centerline{\includegraphics[height=1in]{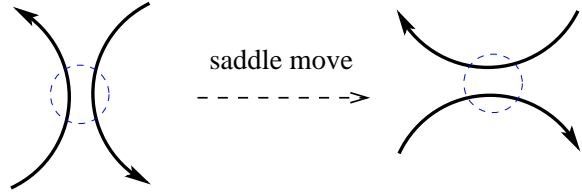}}
\caption{\label{F:saddle-move}
Saddle move which results in a cobordism
of Euler characteristic $-1$ between links $L$ and $L'$.}
\end{figure}

Applying this argument inductively we get that the there is a bordism
between $\widehat{\alpha\beta}$ and $\widehat{\alpha}\sqcup \widehat{\beta}$,
where $\alpha,\beta\in \B B_n$ and that the Euler characteristic
of this bordism is equal to~$-n$.
In our situation we obtain the following sequence of bordisms
(the number over an arrow is an upper bound on the number of one-handles
attached to the previous bordism):
\begin{align*}
\widehat{\alpha \beta \sigma_{(\alpha\beta)}} &\stackrel{2n}{\xrightarrow{\hspace*{1cm}}}
\widehat{\alpha} \sqcup \widehat{\beta} \sqcup \widehat{\sigma_{(\alpha\beta)}} \\
&\stackrel{2n-1}{\xrightarrow{\hspace*{1cm}}} \widehat{\alpha} \sqcup \widehat{\beta}
\sqcup \widehat{\sigma_{(\alpha)}} \sqcup \widehat{\sigma_{(\beta)}} \\
&\stackrel{2n}{\xrightarrow{\hspace*{1cm}}} \widehat{\alpha\sigma_{(\alpha)}}
\sqcup \widehat{\beta\sigma_{(\beta)}}\\
&\stackrel{1}{\xrightarrow{\hspace*{1cm}}}
\widehat{\alpha\sigma_{(\alpha)}}\# \widehat{\beta\sigma_{(\beta)}}
\end{align*}
The number of handles in the second bordism follows from an observation
that the closure $\widehat{\gamma_{\sigma_{(\gamma)}}}$ is a trivial
link with at most $n$ components for any $\gamma\in \B B_n$.
It follows that there is a bordism between
$$
\widehat{\alpha\beta\sigma_{(\alpha\beta)}}\#
-(\widehat{\alpha\beta\sigma_{(\alpha\beta)}})^*
\stackrel{6n}{\xrightarrow{\hspace*{1cm}}}
\widehat{\alpha\sigma_{(\alpha)}}\#\widehat{\beta\sigma_{(\beta)}}\#
-(\widehat{\alpha\beta\sigma_{(\alpha\beta)}})^*
$$
which is the cylinder with at most $6n$ handles attached which
implies the statement.
\end{proof}

Since the second knot in the above lemma is slice, we obtain
that the four ball genus of the first knot is bounded by $3n+1$.

\begin{proof}[Proof of Theorem \ref{T:main}]
Let $\alpha,\beta\in \B B_n$.
\begin{eqnarray*}
\OP{g}_4\left(\Psi_n(\alpha)+\Psi_n(\beta)-\Psi_n(\alpha\beta)\right)&=&
\OP{g}_4\left[\widehat{\alpha\sigma_{(\alpha)}}\#\widehat{\beta\sigma_{(\beta)}}\#
-(\widehat{\alpha\beta\sigma_{(\alpha\beta)}})^*\right]\\
&\leq& 3n+1
\end{eqnarray*}
This proves that $\Psi_n\colon \B B_n\to \OP{Conc}(\B S^3)$
is a quasihomomorphism with defect bounded by $3n+1$.
\end{proof}

Let us specify the general inequalities from Lemma \ref{L:qm}
to our situation.
\begin{corollary}\label{C:estimates}
The quasihomomorphism  $\Psi_n\colon \B B_n\to \OP{Conc}(\B S^3)$
satisfies the following inequalities for every $\alpha,\beta\in \B B_n$.
\begin{itemize}
\item
$\OP{g_4}(\Psi_n(\alpha) + \Psi_n(\alpha^{-1})\leq D_{\Psi_n} \leq 3n+1$.
\item
$\OP{g_4}(\Psi_n(\alpha^{\beta}-\Psi_n(\alpha))\leq 3D_{\Psi_n}\leq 9n+3$.
\item
$\OP{g_4}(\Psi_n([\alpha,\beta]))\leq 5D_{\Psi_n}\leq 15n + 5$.
\end{itemize}
\end{corollary}
\begin{proof}
Since the closure of $\sigma_1\dots \sigma_{n-1}$ is the unknot we
get that $\Psi_n(\OP{1}_{\B B_n})$ is equal to the trivial concordance class
 and hence
$\OP{g_4}(\Psi_n(\OP{1}_{\B B_n}))=0$. Consequently, the above
inequalities follow directly from Lemma \ref{L:qm} and
Theorem \ref{T:main}.
\end{proof}

\begin{proposition}\label{P:infty}
The sequence of the defects of the quasihomomorphisms
$\Psi_n\colon \B B_n\to \OP{Conc}(\B S^3)$
is unbounded:
$$
\limsup_{n\to \infty}D_{\Psi_n}=\infty.
$$
\end{proposition}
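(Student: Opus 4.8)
The plan is to argue by contradiction, exploiting the compatibility of the maps $\Psi_n$ together with the rigidity of quasimorphisms on the infinite braid group. Suppose the conclusion fails, so that $\limsup_{n\to\infty}D_{\Psi_n}<\infty$. Since each defect is finite (indeed $D_{\Psi_n}\leq 3n+1$ by Theorem \ref{T:main}), a finite limit superior forces the whole sequence to be uniformly bounded, say $D_{\Psi_n}\leq D$ for all $n$. First I would use Proposition \ref{P:inclusion}, which gives $\Psi_{n+1}\circ\iota_n=\Psi_n$, to promote this bound to the infinite braid group: any two elements $g,h\in\B B_\infty$ already lie in some $\B B_n$, and there the quasihomomorphism inequality for $\Psi_\infty$ coincides with the one for $\Psi_n$. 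Hence $\Psi_\infty\colon\B B_\infty\to\OP{Conc}(\B S^3)$ would be a genuine quasihomomorphism with defect at most $D$.

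Next I would feed this into the composition mechanism of Corollary \ref{C:compositions}. Take $\varphi\colon\OP{Conc}(\B S^3)\to\B R$ to be the signature quasimorphism, which is Lipschitz with respect to the four ball genus norm (Example \ref{E:signature}). The argument of Corollary \ref{C:compositions}, now applied to the quasihomomorphism $\Psi_\infty$, shows that $\varphi\circ\Psi_\infty\colon\B B_\infty\to\B R$ is a quasimorphism, and its homogenisation $\overline{\varphi\circ\Psi_\infty}$ is then a homogeneous quasimorphism on $\B B_\infty$. By the result of Kotschick \cite{MR2509718} quoted in the introduction, every homogeneous quasimorphism on $\B B_\infty$ is a scalar multiple of the abelianisation homomorphism, so $\overline{\varphi\circ\Psi_\infty}=c\cdot\OP{Ab}$ for some $c\in\B R$.

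To reach a contradiction I would evaluate on the explicit braid $\alpha=\sigma_1^{-4}\sigma_2\sigma_1^2\sigma_2\in\B B_3\subset\B B_\infty$ of Example \ref{E:signature}. On one hand $\OP{Ab}(\alpha)=0$, since $\alpha$ lies in $[\B B_\infty,\B B_\infty]$, so $c\cdot\OP{Ab}$ forces $\overline{\varphi\circ\Psi_\infty}(\alpha)=0$. On the other hand, because $\Psi_\infty$ restricts to $\Psi_3$ on $\B B_3$, the quasimorphism $\varphi\circ\Psi_\infty$ agrees with $\varphi\circ\Psi_3$ on $\langle\alpha\rangle$, and Example \ref{E:signature} asserts that $\varphi\circ\Psi_3$ is unbounded there. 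For a quasimorphism $q$ with defect $D_q$ the standard estimate $|q(\alpha^p)-p\,\overline{q}(\alpha)|\leq D_q$ shows that unboundedness on $\langle\alpha\rangle$ is equivalent to $\overline{q}(\alpha)\neq 0$; hence $\overline{\varphi\circ\Psi_\infty}(\alpha)\neq 0$, contradicting the previous sentence. This contradiction shows that the defects cannot be uniformly bounded, proving the proposition.

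The main obstacle is the passage to the infinite braid group combined with the use of rigidity: one has to be sure that a uniform bound on the finite defects really does make $\Psi_\infty$ a quasihomomorphism, which is exactly where the compatibility $\Psi_{n+1}\circ\iota_n=\Psi_n$ of Proposition \ref{P:inclusion} is essential, and that the homogenisation of $\varphi\circ\Psi_\infty$ is then forced to be a homomorphism by Kotschick's theorem. The genuinely nontrivial input, namely an element with trivial abelianisation on which $\varphi\circ\Psi_3$ is nonetheless unbounded, is supplied by Example \ref{E:signature}; without such an example the argument would collapse, since a homomorphic homogenisation is entirely consistent with $\Psi_\infty$ being a quasihomomorphism.
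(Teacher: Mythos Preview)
Your proposal is correct and follows essentially the same route as the paper's own proof: assume the defects are uniformly bounded so that $\Psi_\infty$ is a quasihomomorphism, compose with the signature to obtain a quasimorphism on $\B B_\infty$, invoke Kotschick's rigidity to force its homogenisation to be a multiple of the abelianisation, and then contradict this using a braid in $[\B B_3,\B B_3]$ on which the signature quasimorphism is nontrivial. The only cosmetic difference is that the paper points directly to the Gambaudo--Ghys values $\overline{\OP{sign}}_3(\eta_{2,3})=\overline{\OP{sign}}_3(\eta_{3,3})=2$ to see that the homogenisation is not a homomorphism, whereas you cite the braid $\alpha=\eta_{2,3}^{-2}\eta_{3,3}$ of Example~\ref{E:signature}, which packages the same computation.
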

\begin{proof}
If the defects were uniformly bounded then the map $\Psi_{\infty}$ would be a
quasihomomorphism. This would imply, according to Kotschick \cite{MR2509718},
that the composition $\B B_3\to \B B_{\infty} \to \OP{Conc}(\B S^3)\to \B R$,
where the last map is given by the signature link invariant, is a bounded
distance from a homomorphism.  However, it is known that this composition is a
quasimorphism, whose homogenization which does not vanish on the commutator
subgroup $[\B B_3,\B B_3]$. This follows, for example, from the fact that the
non-trivial homogeneous signature quasimorphism $\overline{\OP{sign}}_3$ on $\B B_3$
defined in \cite{MR2104597} is not a homomorphism. Indeed, if it is a
non-trivial homomorphism, then its value on the braid $\eta_{3,3}$ must be
equal to twice its value on the braid $\eta_{2,3}$, where the braids
$\eta_{2,3}$ and $\eta_{3,3}$ are shown in Figure \ref{fig:braids-eta-i-n}.
However, in \cite{MR2104597} Gambaudo-Ghys showed that
$\overline{\OP{sign}}_3(\eta_{2,3})=\overline{\OP{sign}}_3(\eta_{2,3})=2$.
\end{proof}

\begin{remark*} The above proposition does not exclude the possibility that
$\Psi_{\infty}\colon \B B_{\infty}\to \OP{Conc}(\B S^3)$ is
Lipschitz. It can't be Lipschitz, however, with respect to the commutator
length because the latter is bounded by two on the infinite braid group
\cite[Theorem 2.2]{MR2509711}.
\end{remark*}

\subsection*{Proof of Theorem \ref{T:lipschitz}}
The main ingredient of the proof is the following observation.
\begin{lemma}\label{L:r1}
Let $\alpha,\beta\in \B B_n$. Suppose that
$\alpha=\sigma_{i_1}^{\pm 1}\dots \sigma_{i_m}^{\pm 1}\in \B B_n$ and
$\beta=\sigma_{i_1}^{\pm 1}\dots \sigma_{i_{k-1}}^{\pm 1}
\sigma_{i_{k+1}}^{\pm }\dots\sigma_{i_m}^{\pm 1}$.
That is, $\beta$ is obtained from $\alpha$ by removing one crossing.
Then there is a smooth bordism $\Sigma\to\B B^4$ from
the closure $\widehat{\alpha}$ to the closure $\widehat{\beta}$ whose Euler
characteristic is equal to $-1$.
\end{lemma}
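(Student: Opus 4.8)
The plan is to realize the passage from $\widehat{\alpha}$ to $\widehat{\beta}$ by a single saddle move and then read off the Euler characteristic exactly as in Lemma~\ref{L:bordism}. First I would compare the two standard braid diagrams: the words for $\alpha$ and $\beta$ agree letter by letter except that $\beta$ omits $\sigma_{i_k}^{\pm 1}$, so the diagrams of the closures $\widehat{\alpha}$ and $\widehat{\beta}$ coincide outside a small disk $D$ surrounding the crossing produced by this letter. Inside $D$ the diagram of $\widehat{\alpha}$ displays a single crossing of the strands in positions $i_k$ and $i_k+1$, whereas that of $\widehat{\beta}$ displays two disjoint parallel arcs.

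Next I would identify this local modification with an orientation preserving resolution of the crossing. In any braid all strands are coherently oriented (say, pointing downward), hence the two arcs meeting inside $D$ carry parallel orientations; replacing their crossing by two parallel arcs is therefore the oriented smoothing, regardless of the sign of $\sigma_{i_k}^{\pm 1}$ and of which strand passes over. This is precisely the local operation depicted in Figure~\ref{F:saddle-move}, now applied with $L=\widehat{\alpha}$ and $L'=\widehat{\beta}$.

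I would then construct the bordism $\Sigma$ directly. Over the common part of the two diagrams take the product cylinder in $\B S^3\times[0,1]$, and over $D$ insert the saddle interpolating between the crossing and its smoothing, that is, attach a single band (one-handle) to the cylinder; this surface realizes the required $\Sigma\to\B B^4$. The cylinder over a one-manifold is a disjoint union of annuli and so has Euler characteristic $0$, while attaching one band lowers the Euler characteristic by exactly one, giving $\chi(\Sigma)=-1$. Since the smoothing is orientation preserving, the band is attached compatibly with the orientations and $\Sigma$ is an oriented bordism, as required.

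The one step demanding care is the identification in the second paragraph: one must check that deleting a generator yields the oriented smoothing and not the other (disoriented) resolution, since only the former is a single $\chi=-1$ saddle. This is exactly where the braid hypothesis enters — the uniform downward orientation of all strands forces the removal of a crossing to agree with the oriented smoothing, independently of its sign. Once this is settled, the Euler characteristic count is the same bookkeeping already carried out in Lemma~\ref{L:bordism}.
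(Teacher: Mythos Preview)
Your argument is correct: deleting a generator from a braid word is exactly the oriented (Seifert) smoothing of the corresponding crossing, because all strands in a braid are coherently oriented, and a single oriented smoothing is realised by one saddle, giving $\chi(\Sigma)=-1$. This is in fact a cleaner route than the paper's. The paper first performs a Reidemeister~I move just past the crossing so that the two strands become locally \emph{anti}-parallel, then applies the saddle of Figure~\ref{F:saddle-move} in that anti-parallel picture, and finally cleans up with a Reidemeister~II move; since R1 and R2 are isotopies, only the saddle contributes and one again gets $\chi=-1$. Your approach avoids the auxiliary Reidemeister moves by observing directly that the parallel-strand smoothing is itself a saddle. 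One small caveat: whether Figure~\ref{F:saddle-move} literally depicts the parallel case depends on how that figure was drawn (it is typically presented with anti-parallel arcs, which is why the paper inserts the kink), so your sentence ``this is precisely the local operation depicted in Figure~\ref{F:saddle-move}'' may be pictorially inexact even though the cobordism you describe is the right one. The mathematics stands regardless.
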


\begin{proof}
It is enough to argue locally at a neighborhood of a crossing.
The proof for removal of $\sigma_i$ is presented in Figure
\ref{F:saddle-cobordism}. The proof for removal of $\sigma_i^{-1}$ is
analogous.
\begin{figure}[htb]
\centerline{\includegraphics[height=2in]{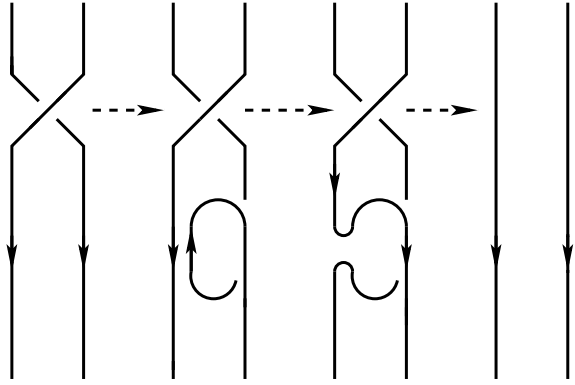}}
\caption{\label{F:saddle-cobordism} Local moves.}
\end{figure}

The first step is to change the braid after the crossing by the first
Reidemeister move (appropriately chosen)
so the neighboring strings go in opposite directions.
Next we perform a saddle move.
Then we apply the second Reidemeister move which results in the
braid $\beta$. We obtain
that there is a bordism from the closure of $\alpha$ to the
closure of $\beta$ whose Euler characteristic is equal to $-1$.
\end{proof}

\begin{proof}[Proof of Theorem \ref{T:lipschitz}]
Let $\alpha \in \B B_n$. Suppose that $\|\alpha\|=k$ which means
that
$$
\alpha = \beta_1\sigma_1^{\pm 1}\beta_1^{-1}\dots \beta_k\sigma_1^{\pm 1}\beta_k^{-1}
$$
for some $\beta_i\in \B B_n$. The knot $\Psi_n(\alpha)$ is the
closure of the braid
$$
\beta_1\sigma_1^{\pm 1}\beta_1^{-1}\dots \beta_k\sigma_1^{\pm 1}\beta_k^{-1}\sigma_{(\alpha)}.
$$
By applying Lemma \ref{L:r1} $k$ times we remove the crossings corresponding
to $\sigma_1^{\pm 1}$ in the above presentation and we obtain that the above closure
is bordant to the closure of $\sigma_{(\alpha)}$ via a bordism whose Euler
characteristic is equal to $-k$.

The closure of $\sigma_{(\alpha)}$ is a trivial link and we  cap off all
its components. This increases the Euler characteristic by the number of
components which is at least one and
yields a surface of genus at most $k/2$
in $\B B^4$ bounded by $\Psi_n(\alpha)$. We get that
$$
\OP{g_4}(\Psi_n(\alpha))\leq \frac{k}{2} = \frac{1}{2}\|\alpha\|
$$
as claimed.
\end{proof}

\subsection{Proofs of the corollaries}

\begin{proof}[Proof of Corollary \ref{C:compositions}]
Let $\alpha,\beta \in \B B_n$ and let $\phi\colon \OP{Conc}(\B S^3)\to \B R$
be a quasimorphism which is Lipschitz with respect to the four ball genus norm.
Let $E = \Psi_n(\alpha\beta) - \Psi_n(\alpha) - \Psi_n(\beta)$.
It follows from the quasihomomorphism property of $\Psi_n$ that
$\OP{g_4}(E)\leq D_{\Psi_n}$.
\begin{align*}
|\phi(\Psi_n(\alpha)) &- \phi(\Psi_n(\alpha\beta)) + \phi(\Psi_n(\beta))|\\
&\leq
|\phi(\Psi_n(\alpha)) - \phi(\Psi_n(\alpha)+\Psi_n(\beta)+E) + \phi(\Psi_n(\beta))|\\
&\leq
2D_{\phi} + |\phi(E)| \leq 2D_{\phi} + C_{\phi} D_{\Psi_n},
\end{align*}
where $C_{\phi}$ is the Lipschitz constant of $\phi$.
\end{proof}

\begin{proof}[Proof of Corollary \ref{C:lipschitz-bi}]
First we prove that the quasihomomorphism $\Psi_n$ is Lipschitz with
respect to the biinvariant word metric.
Let $\alpha \in \B B_n$ be an element of the biinvariant
word norm $\|\alpha\|$ equal to $k$. This means that
$\alpha =(\sigma_1^{\pm 1})^{p_1}\dots (\sigma_1^{\pm})^{p_k}$
for some $p_i\in \B B_n$.
\begin{align*}
\OP{g_4}(\Psi_n(\alpha))
&= \OP{g_4}(\Psi_n(\sigma_1^{\pm 1})^{p_1}\dots (\sigma_1^{\pm})^{p_k})\\
&\leq \sum_{i=1}^k \OP{g_4}(\Psi_n((\sigma_1^{\pm 1})^{p_i})) + (k-1)D_{\Psi_n}\\
&\leq D_{\Psi_n}(k-1) \\
&\leq D_{\Psi_n}\|\alpha\|\leq (3n+1)\|\alpha\|.
\end{align*}
The first inequality follows from the quasihomomorphism
property and
the second inequality is a consequence of the fact that the closure
of $\sigma_1^{\pm 1}$ is the unknot.

Let us now consider the restriction of $\Psi_n$ to the commutator
subgroup $[\B B_n,\B B_n]$ equipped with the commutator length.
Since
$$\OP{g_4}(\Psi_n[\alpha,\beta])\leq 5D_{\Psi_n} \leq 15n+5,$$
due to Corollary \ref{C:estimates}, we get that if $\alpha\in [\B B_n,\B B_n]$ then
$$
\OP{g_4}(\Psi_n(\alpha))\leq 6D_{\Psi_n}\OP{cl}(\alpha) \leq (18n+6)\OP{cl}(\alpha).
$$
\end{proof}

\begin{proof}[Proof of Corollary \ref{C:scl}]
Let $\alpha\in [\B B_n,\B B_n]$.
It follows from the quasihomomorphism property that
\begin{equation*}
\OP{g}_4\left(\Psi_n\left(\alpha^{k}\right)-k\Psi_n(\alpha)\right)
\leq (k-1)D_{\Psi_n}.
\end{equation*}
By dividing by $k$ and taking the limit we obtain that
\begin{equation}\label{eq:qm-g-st}
\left|\limsup_{k\to \infty}\frac{\OP{g}_4(\Psi_n(\alpha^k))}{k}-
\OP{sg}_4(\Psi_n(\alpha))\right|
\leq D_{\Psi_n},
\end{equation}
Since $\Psi_n$ is Lipschitz with respect to the commutator length
(the Lipschitz constant is computed in the proof of Corollary
\ref{C:lipschitz-bi}), we have that
$$
\limsup_{k\to \infty}\frac{\OP{g}_4(\Psi_n(\alpha^k))}{k}\leq
\lim_{k\to \infty}\frac{6D_{\Psi_n}\OP{cl}(\alpha^k)}{k}
=6D_{\Psi_n}\OP{scl}(\alpha).
$$
It then follows from Equation (\ref{eq:qm-g-st}) that
$$
\OP{sg_4}(\Psi_n(\alpha))\leq 6D_{\Psi_n}\OP{scl}(\alpha) + D_{\Psi_n}
$$
as claimed.
\end{proof}

\begin{proof}[Proof of Corollary \ref{C:bounded}]
Since the map $\Psi_n$ is Lipschitz by Corollary \ref{C:lipschitz-bi}
we obtain that
$$
\OP{g}_4(\Psi_n(\alpha^k))\leq (3n+1)\|\alpha^k\|.
$$
The stable commutator length of $\alpha$ is trivial which
implies, according to the bq-dichotomy (see Section \ref{SS:cl}),
that the cyclic subgroup generated by $\alpha$ is bounded.
As a consequence we get the uniform bound on the four
ball genus of $\Psi_n(\alpha^k)$.
\end{proof}

\begin{proof}[Proof of Corollary \ref{C:unboundedsg}]
Recall that $\varphi\colon \OP{Conc}(\B S^3)\to \B R$ is
a Lipschitz quasimorphism with the Lipschitz constant $C_{\varphi}$.
Let $\alpha\in \B B_n$ and $p\in \B N$.
\begin{align*}
\OP{sg_4}(\Psi_n(\alpha^p))&=\limsup_k \frac{\OP{g_4}(k\,\Psi_n(\alpha^p))}{k}\\
&\geq \limsup_k\frac{\OP{g_4}(\Psi_n(\alpha^{kp})) - (k-1)D_{\Psi_n}}{k}\\
&\geq \frac{1}{C_{\varphi}}
\limsup_k\frac{\left|(\varphi\circ\Psi_n)(\alpha^{kp})\right|}{k} - D_{\Psi_n}\\
&\geq  \frac{1}{C_{\varphi}}
\left|(\overline{\varphi\circ\Psi_n})(\alpha^{p})\right| - D_{\Psi_n}\\
&= \frac{|(\overline{\varphi\circ\Psi_n})(\alpha)|}{C_{\varphi}}\cdot p - D_{\Psi_n}
\end{align*}
The first inequality follows from the quasihomomorphism property,
the second is the Lipschitz property of $\varphi$ and the
third is the definition of the homogenization of a quasimorphism.
\end{proof}

\begin{proof}[Proof of Corollary \ref{C:binfty}]
Let $\alpha \in [\B B_{\infty},\B B_{\infty}]$. Since the abelianisations
of all braid groups are isomorphic, we have that
$\alpha\in [\B B_n,\B B_n]$ for some
$n\in \B N$. Let $\varphi\colon \OP{Conc}(\B S^3)\to \B R$ be a Lipschitz
quasimorphism such that $\varphi(\Psi_{n}(\alpha))\neq~0$.
We have the following inequalities:
$$
\frac{|(\overline{\varphi\circ \Psi_n}(\alpha)|}{C_{\varphi}}\cdot p - D_{\Psi_n}
\leq \OP{sg_4}(\Psi_n(\alpha^p))\leq \OP{g_4}(\Psi_n(\alpha^p))=\OP{g_4}(\Psi_\infty(\alpha^p))\leq
\frac{1}{2}\|\alpha^p\|.
$$
The first inequality follows from Corollary \ref{C:unboundedsg}, the second
one is obvious, the equality follows from Proposition \ref{P:inclusion} and the
last one follows from Theorem \ref{T:lipschitz}.  By dividing by $p$ and
passing to the limit with $p\to \infty$ we
obtain that $\lim_{p\to \infty}\frac{\|\alpha^p\|}{p}>0$ as claimed.
\end{proof}

\subsection{Strong displaceability of braids}\label{SS:disp}

Let $m\in \B N$ be a natural number. A subgroup $H\subset G$
is called strongly $m$-displaceable if there exists $g\in G$
such that the conjugate subgroups $g^iHg^{-i}$ and $g^{j}Hg^{-j}$
commute for $0\leq i<j\leq m$.

\begin{proposition}\label{P:disp}
For every natural numbers $m,n\in \B N$ the braid group
$\B B_n$ is strongly $m$-displaceable in $\B B_{\infty}$
and the commutator subgroup  $[\B B_n,\B B_n]$ is strongly
$m$-displaceable in $[\B B_{\infty},\B B_{\infty}]$.
\end{proposition}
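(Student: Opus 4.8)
The plan is to displace $\B B_n$ by sliding it across pairwise disjoint blocks of consecutive strands. For $0\le i\le m$ set $B_i=\{in+1,\ldots,in+n\}$ and let $\B B_{B_i}\subset \B B_{(m+1)n}$ be the parabolic subgroup generated by $\sigma_{in+1},\ldots,\sigma_{in+n-1}$, so that $\B B_{B_0}$ is the standard copy of $\B B_n$ on the first $n$ strands. The elementary fact I would record first is that parabolic subgroups carried by disjoint blocks commute: if $i\ne j$ then every generator of $\B B_{B_i}$ has index differing by at least two from every generator of $\B B_{B_j}$, so the braid relation $\sigma_a\sigma_b=\sigma_b\sigma_a$ for $|a-b|\ge 2$ shows that $\B B_{B_i}$ and $\B B_{B_j}$ commute elementwise. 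Hence it suffices to produce a single $g\in\B B_\infty$ whose iterated conjugation carries $\B B_{B_0}$ successively onto the blocks $B_0,B_1,\ldots,B_m$.

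To build such a $g$ I would use the $n$-cabling homomorphism $c\colon \B B_{m+1}\to \B B_{(m+1)n}$, which replaces each of the $m+1$ strands by a parallel bundle of $n$ strands. Let $\delta=\sigma_1\sigma_2\cdots\sigma_m\in\B B_{m+1}$, whose underlying permutation $\pi$ is an $(m+1)$-cycle, and put $g_0:=c(\delta)$. Conjugation by $g_0$ slides the $i$-th bundle to the $\pi(i)$-th position and carries its internal braiding along, so $g_0\,\B B_{B_i}\,g_0^{-1}=\B B_{B_{\pi(i)}}$ for every $i$. Because $\pi$ is a single $(m+1)$-cycle, the blocks $B_{\pi^0(0)},\ldots,B_{\pi^m(0)}$ are precisely $B_0,\ldots,B_m$ in some order, hence pairwise disjoint; combined with the commuting lemma this shows that $g_0^iHg_0^{-i}$ and $g_0^jHg_0^{-j}$ commute for $H=\B B_n$ and $0\le i<j\le m$. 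As $g_0$ is finitely supported it lies in $\B B_\infty$, and this proves that $\B B_n$ is strongly $m$-displaceable in $\B B_\infty$.

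For the commutator subgroups the same geometric displacement works, since $[\B B_n,\B B_n]=[\B B_{B_0},\B B_{B_0}]$ and $g_0^i[\B B_{B_0},\B B_{B_0}]g_0^{-i}=[\B B_{B_{\pi^i(0)}},\B B_{B_{\pi^i(0)}}]$ remains supported on the block $B_{\pi^i(0)}$. The one genuine obstacle is that the displacing element is now required to lie in $[\B B_\infty,\B B_\infty]$, whereas $g_0=c(\delta)$ is a positive braid and so has in general nonzero image $E:=\OP{Ab}(g_0)$ under the abelianisation $\OP{Ab}\colon\B B_\infty\to\B Z$. I would remove this defect by a far-away correction: choose an index $N>(m+1)n$ and set $g:=g_0\,\sigma_N^{-E}$, which still lies in a finite braid group and hence in $\B B_\infty$. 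Since $\sigma_N$ is supported on strands disjoint from all of $B_0,\ldots,B_m$ and its index differs from every generator occurring in $g_0$ by at least two, $\sigma_N^{-E}$ commutes with $g_0$ and with each $\B B_{B_i}$; consequently $g^i=g_0^i\sigma_N^{-iE}$ and $g^i\,[\B B_{B_0},\B B_{B_0}]\,g^{-i}=g_0^i\,[\B B_{B_0},\B B_{B_0}]\,g_0^{-i}$, so $g$ displaces $[\B B_n,\B B_n]$ exactly as $g_0$ did. At the same time $\OP{Ab}(g)=E-E=0$, so $g\in[\B B_\infty,\B B_\infty]$, which is what the second assertion requires. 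The steps I expect to need the most care are the identity $g_0\,\B B_{B_i}\,g_0^{-1}=\B B_{B_{\pi(i)}}$, i.e.\ that cabled conjugation respects the parabolic structure, and the bookkeeping ensuring that the correction term $\sigma_N^{-E}$ genuinely commutes with everything in sight.
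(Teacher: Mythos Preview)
Your argument is correct and follows the same underlying geometric idea as the paper: displace $\B B_n$ by conjugating with a cabled braid that cyclically permutes $m+1$ disjoint blocks of $n$ strands, and then use that parabolic subgroups on disjoint blocks commute. The paper phrases the cabled generator as an explicit ``argyle braid'' $A_{n,i}$ (your $c(\sigma_i)$) and takes the displacing element to be their product, so at this level your proof and the paper's coincide.

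The genuine difference is in how the displacing element is pushed into $[\B B_\infty,\B B_\infty]$. The paper modifies the internal crossings of each argyle block so that the signs alternate, obtaining a ``commutator argyle braid'' $A'_{n,i}$ with exponent sum zero; this trick needs $n$ even, and the odd case is then handled by the observation that strong $m$-displaceability of $\B B_n$ implies that of $\B B_{n-1}$. Your fix is instead to leave $g_0=c(\delta)$ untouched and cancel its abelianisation by a factor $\sigma_N^{-E}$ supported far to the right of all the blocks. This is cleaner: it works uniformly for every $n$, avoids the parity case split, and the verification that $\sigma_N^{-E}$ commutes with $g_0$ and with each $\B B_{B_i}$ is immediate from the standard relation $\sigma_a\sigma_b=\sigma_b\sigma_a$ for $|a-b|\ge 2$. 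Your choice of $m+1$ blocks is also the right bookkeeping for the range $0\le i<j\le m$ in the definition. The one step you flagged---that conjugation by $c(\delta)$ carries $\B B_{B_i}$ isomorphically onto $\B B_{B_{\pi(i)}}$---is the standard ``slide the local braid along the cable'' isotopy and is exactly what the paper's argyle picture encodes.
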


\begin{proof}
For every $n\in \B N$ we define an {\em argyle braid} $A_{n,i}$
to be
$$
A_{n,i}:=
\prod_{k=1}^n\prod_{j=1}^n \sigma_{in-k+j}
$$
This is a braid that swaps the $i$-th $n$ strings with
the $(i+1)$-th $n$ strings, it is a product of $n^2$
standard generators and it looks like an argyle pattern.
For example, $A_{1,i}=\sigma_i$.
If $n$ is even then by making the pattern alternating
we define a {\em commutator argyle braid} by
$$
A'_{n,i}:=
\prod_{k=1}^n\prod_{j=1}^n \sigma_{in-k+j}^ {(-1)^{j}}
$$
An example of $A'_{4,1}$ drawn in Figure \ref{fig:argyle-braid}.
\begin{figure}[htb]
\centerline{\includegraphics[height=1.9in]{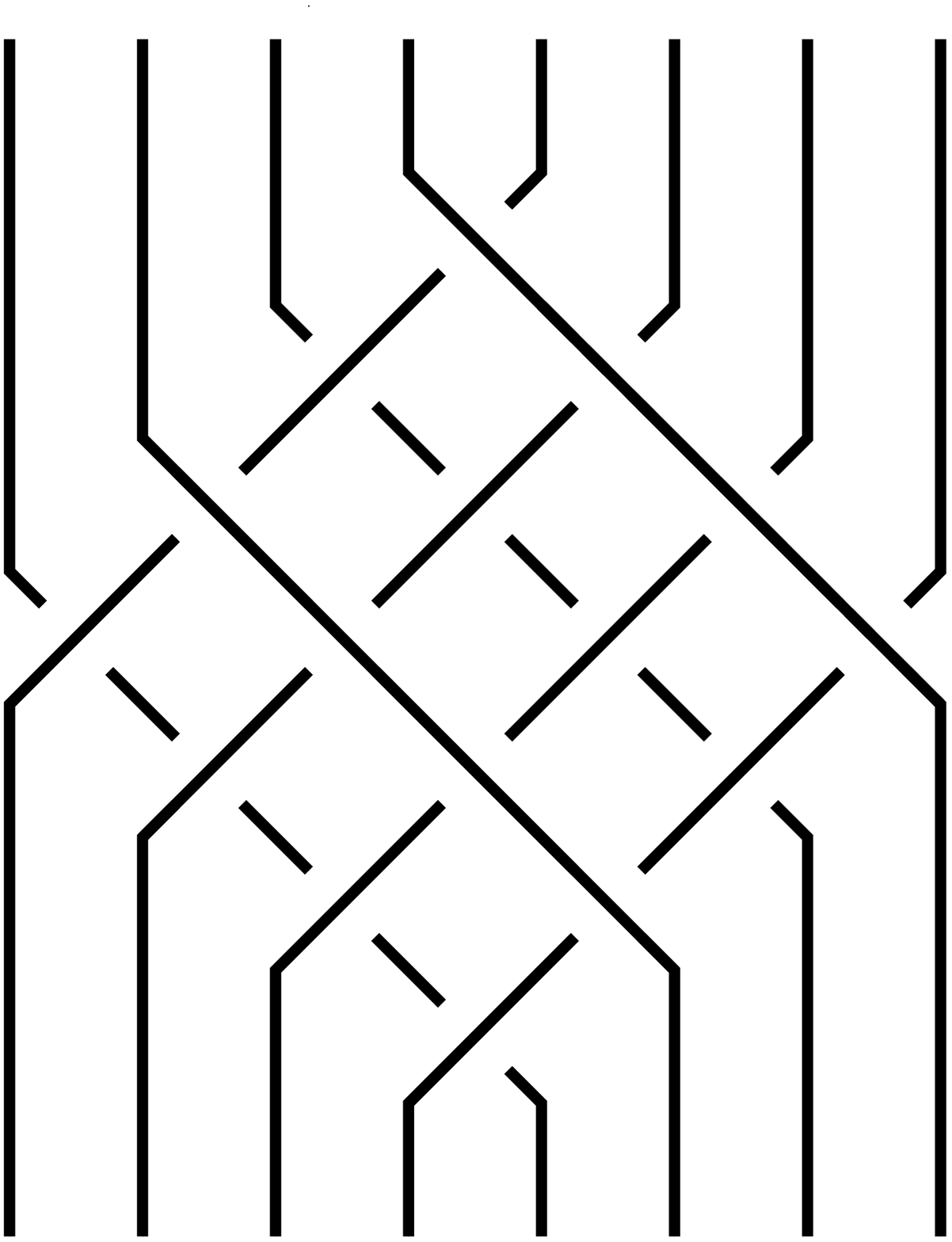}}
\caption{\label{fig:argyle-braid} The commutator argyle braid $A'_{4,1}$.}
\end{figure}

Let $\B B_n\subset \B B_{\infty}$, where $n$ is a positive and even
natural number. For any $m\in \B N$ we define
$$
\Delta:= A'_{n,1}A'_{n,2}\dots A'_{n,m-1}\in [\B B_{mn},\B B_{mn}].
$$
The braid $\Delta$ is presented in Figure \ref{fig:argyle-delta-braid} in
which each line represents $n$ strings and
each crossing is the appropriate argyle braid~$A'_{n,i}$.
\begin{figure}[htb]
\centerline{\includegraphics[height=2.3in]{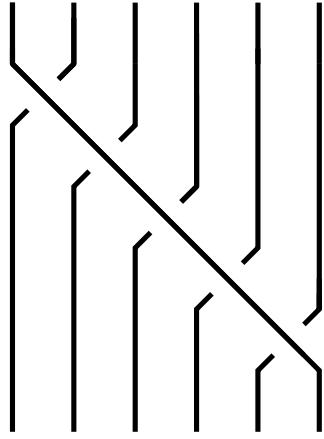}}
\caption{\label{fig:argyle-delta-braid} The braid $\Delta$ for $m=6$.}
\end{figure}

It shows that the group $\B B_n$ is strongly $m$-displaceable in $\B B_{mn}$ by the braid $\Delta$
and that the commutator subgroup $[\B B_n,\B B_n]$ is strongly $m$-displaceable
in $[\B B_{mn},\B B_{mn}]$. Since the strong $m$-displaceability of
$\B B_n$ implies the strong $m$-displaceability of $\B B_{n-1}$,
this observation implies the statement and finishes the proof.
\end{proof}

\section{Examples}\label{S:examples}

\subsection{Bounded cyclic subgroup of braid groups}\label{SS:boundedcyclic}

The following lemma was proved in \cite{bgkm} but, since its proof
is short, we present it for completeness.
\begin{lemma}\label{L:boundedbraid}
Let $G$ be a normally finitely generated group and let $\alpha,\Delta\in G$.
If $\alpha$ commutes with the conjugate $\alpha^{\Delta}$ then the
cyclic subgroup generated by $[\alpha,\Delta]$ is bounded with
respect to the biinvariant word metric on $G$.
In particular, if $\alpha$ is conjugate to its inverse then
$\alpha$ generates a bonded cyclic subgroup.
\end{lemma}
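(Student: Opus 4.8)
The plan is to reduce the whole statement to the single identity $[\alpha,\Delta]^n = [\alpha^n,\Delta]$, which holds precisely because $\alpha$ commutes with its conjugate $\alpha^{\Delta}$, and then to bound the right-hand side by a quantity independent of $n$ using conjugation invariance of the word norm. Set $\beta := [\alpha,\Delta]$ and rewrite it as $\beta = \alpha\,(\alpha^{\Delta})^{-1}$, using the convention $\alpha^{\Delta} = \Delta\alpha\Delta^{-1}$ (indeed $\alpha\,(\Delta\alpha\Delta^{-1})^{-1} = \alpha\Delta\alpha^{-1}\Delta^{-1}$). The hypothesis says $\alpha$ and $\alpha^{\Delta}$ commute, hence $\alpha$ and $(\alpha^{\Delta})^{-1}$ commute, so the two factors of $\beta$ separate under taking powers: $\beta^n = \alpha^n\,(\alpha^{\Delta})^{-n} = \alpha^n\,\Delta\alpha^{-n}\Delta^{-1} = [\alpha^n,\Delta]$.

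Next I would bound $\|[\alpha^n,\Delta]\|$ uniformly in $n$. The key manipulation is to keep $\alpha^n$ intact inside a single conjugate: writing $[\alpha^n,\Delta] = (\alpha^n\Delta\alpha^{-n})\,\Delta^{-1} = \Delta^{\alpha^n}\,\Delta^{-1}$, conjugation invariance gives $\|\Delta^{\alpha^n}\| = \|\Delta\|$, and property $(2)$ of a norm gives $\|\Delta^{-1}\| = \|\Delta\|$. The triangle inequality then yields $\|\beta^n\| = \|[\alpha^n,\Delta]\| \le \|\Delta^{\alpha^n}\| + \|\Delta^{-1}\| = 2\|\Delta\|$ for every $n$, which is exactly the desired boundedness of the cyclic subgroup $\langle\beta\rangle$.

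For the final ``in particular'' clause I would specialize: if $\alpha$ is conjugate to its inverse, choose $\Delta$ with $\alpha^{\Delta} = \alpha^{-1}$; then $\alpha$ trivially commutes with $\alpha^{\Delta}$, so the hypothesis is met, and $\beta = \alpha\,(\alpha^{\Delta})^{-1} = \alpha\cdot\alpha = \alpha^2$. The main statement bounds $\|\alpha^{2k}\| = \|\beta^k\| \le 2\|\Delta\|$, and $\|\alpha^{2k+1}\| \le \|\alpha^{2k}\| + \|\alpha\| \le 2\|\Delta\| + \|\alpha\|$, so all powers of $\alpha$ have uniformly bounded norm and $\langle\alpha\rangle$ is bounded.

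The proof is short, so there is no serious obstacle so much as one decisive idea, and one must be careful not to miss it: the commutativity of $\alpha$ with $\alpha^{\Delta}$ is what collapses $\beta^n$ into the single commutator $[\alpha^n,\Delta]$ rather than leaving it as an $n$-fold product whose length one cannot control. Every naive estimate that expands $\alpha^n$ directly — e.g. bounding $\|[\alpha^n,\Delta]\|$ by $2\|\alpha^n\|$ — grows with $n$ and is useless; the entire force of the argument is that $\alpha^n$ survives inside the conjugate $\Delta^{\alpha^n}$, whose norm is simply $\|\Delta\|$. I would therefore present the commutation step carefully and flag that it is the sole place where the hypothesis is used.
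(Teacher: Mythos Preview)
Your proof is correct and follows essentially the same approach as the paper: both establish the identity $[\alpha,\Delta]^n=[\alpha^n,\Delta]$ from the commutation hypothesis, then bound $\|[\alpha^n,\Delta]\|\le 2\|\Delta\|$ via conjugation invariance, and handle the ``in particular'' clause by noting $\alpha^2=[\alpha,\Delta]$ and treating odd powers separately. Your write-up is simply more explicit (spelling out $[\alpha^n,\Delta]=\Delta^{\alpha^n}\Delta^{-1}$ and the odd-power bound $2\|\Delta\|+\|\alpha\|$) than the paper's terse version.
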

\begin{proof}
An induction argument yields the following equality
$$
[\alpha,\Delta]^n=[\alpha^n,\Delta].
$$
This implies that
$
\left\|[\alpha,\Delta]^n\right\| =
\left\|[\alpha^n,\Delta]\right\| \leq 2\|\Delta\|
$
which finishes the general part. If $\alpha$ is conjugate
to its inverse via $\Delta$ then $\alpha^2=[\alpha,\Delta]$
and the second statement follows. Moreover,
the braid $\|\alpha^n\|$ is bounded by $2\|\Delta\| + \|\alpha\|$
in this case.
\end{proof}

\begin{example}\label{E:s1s2}
Let $\sigma_1\sigma_2^{-1}\in \B B_3$ and let
$\Delta=\sigma_1\sigma_2\sigma_1$
be the Garside element (the half twist). Observe that
$$
(\sigma_1\sigma_2^{-1})^{\Delta}=\sigma_2\sigma_1^{-1},
$$
that is $\sigma_1\sigma_2^{-1}$ is conjugate to its inverse.
It then follows from Lemma \ref{L:boundedbraid} that the braid
$\sigma_1\sigma_2^{-1}$ generates a  cyclic
subgroup in $\B B_3$ bounded by
$8=2\|\Delta\|+\|\sigma_1\sigma_2^{-1}\|$.
\end{example}

\begin{example}\label{E:s1s3}
Let $\sigma_1\sigma_3^{-1}\in \B B_4$. It is conjugate
to its inverse via the braid which swaps the first two
and the last two strings. More precisely,
the conjugating braid is given in this case by
$\Delta = \sigma_2\sigma_1\sigma_3\sigma_2$.
Again, by the above lemma we
obtain that $\sigma_1\sigma_3^{-1}$ generates a
cyclic subgroup of $\B B_4$ bounded by
$10=2\|\Delta\|+\|\sigma_1\sigma_3^{-1}\|$.
\end{example}

\subsection{The notorious family $\Psi_3((\sigma_1\sigma_2^{-1})^n)$.}
\label{SS:notorious}
We saw in the previous section that the cyclic subgroup
generated by $\sigma_1\sigma_2^{-1}$ is bounded in $\B B_3$.
According to Corollary
\ref{C:lipschitz-bi}, Corollary \ref{C:bounded} and Example
\ref{E:s1s2} we get the following bound
on the four ball genus
$$
\OP{g_4}(\Psi_3((\sigma_1\sigma_2^{-1})^n))
\leq \frac{1}{2}\|(\sigma_1\sigma_2^{-1})^n\|
\leq 4.
$$
So we obtain a very simple example of a infinite
family of knots with uniformly bounded four ball
genus. However, this family has been notorious in
the sense that it remains an open problem whether
the induced family of concordance classes
$\Psi_3((\sigma_1\sigma_2^{-1})^n)$
for $n$ not divisible by three is
infinite. It is known that these concordance classes
are of order at most two so the question is whether
these knots are slice.

\subsection{The family $\Psi_4((\sigma_1\sigma_3^{-1})^n)$}
\label{SS:s1S3}
By the same argument as above we get that the four ball genus
of the knots $\Psi_4((\sigma_1\sigma_3^{-1})^n)$ is bounded
by $\frac{1}{2}\|(\sigma_1\sigma_3^{-1})^n)\|=5$. In this
case, however, we know that the set of induced concordance
classes is infinite. More precisely, we have the following
result.

\begin{proposition}\label{P:inf-family}
Let $\gamma=\sigma^2_1\sigma_3^{-2}\in\B B_4$. Then the set
$\{\Psi_4(\gamma^n)\}_{n=1}^\infty$ is infinite in $\OP{Conc}(\B S^3)$, and the
four ball genus of $\Psi_4(\gamma^n)$ is uniformly bounded. Moreover, there
exists an increasing sequence of natural numbers $\{n_i\}_{i=1}^\infty$ such
that the set $\{\Psi_4(\gamma^{n_i})\}_{i=1}^\infty$ generates $\B Z^\infty$ in
$\OP{Conc}(\B S^3)$.
\end{proposition}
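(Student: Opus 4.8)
My plan is to compute the concordance class $\Psi_4(\gamma^n)$ explicitly, deduce the uniform genus bound, and then establish the $\B Z^\infty$ statement by a signature argument combined with a telescoping trick. First I would identify $\Psi_4(\gamma^n)$. Since $\sigma_1$ and $\sigma_3$ commute and each of $\sigma_1^2,\sigma_3^2$ induces the identity permutation, the braid $\gamma^n=\sigma_1^{2n}\sigma_3^{-2n}$ induces the trivial permutation in $\Sigma_4$; hence $\sigma_{(\gamma^n)}=\sigma_1\sigma_2\sigma_3$ and $\Psi_4(\gamma^n)=[\widehat{\sigma_1^{2n}\sigma_3^{-2n}\sigma_1\sigma_2\sigma_3}]$. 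In this closure the generator $\sigma_1$ of the connecting braid turns $\sigma_1^{2n}$ into $\sigma_1^{2n+1}$, whose closure on the first two strings is the torus knot $T_{2n+1}$; the generator $\sigma_3$ turns $\sigma_3^{-2n}$ into $\sigma_3^{-(2n-1)}$, whose closure is the mirror $T_{2n-1}^*$; and $\sigma_2$ realises the connected sum. This is exactly the identification recorded in the remark following Corollary \ref{C:scl}, namely $\Psi_4(\gamma^n)=[T_{2n+1}\#T_{2n-1}^*]$. Writing $a_m:=[T_{2m+1}]\in\OP{Conc}(\B S^3)$, with $a_0=[T_1]=0$, and using that torus knots are invertible so that $[T_k^*]=-[T_k]$, I obtain the clean formula $\Psi_4(\gamma^n)=a_n-a_{n-1}$.

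The uniform genus bound is the easy part. Conjugation by $\Delta=\sigma_2\sigma_1\sigma_3\sigma_2$ interchanges $\sigma_1$ and $\sigma_3$, so $\gamma^{\Delta}=\gamma^{-1}$; Lemma \ref{L:boundedbraid} then shows that $\langle\gamma\rangle$ is bounded in the biinvariant word norm (this is Example \ref{E:s1S3} applied to $\sigma_1\sigma_3^{-1}$, since $\gamma=(\sigma_1\sigma_3^{-1})^2$), and Theorem \ref{T:lipschitz} gives $\OP{g_4}(\Psi_4(\gamma^n))\leq\tfrac12\|\gamma^n\|\leq C$ with $C$ independent of $n$.

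The main content is the $\B Z^\infty$ statement, and my approach is to prove the stronger fact that the whole family $\{\Psi_4(\gamma^n)\}_{n\geq1}$ is linearly independent, so that one may take $n_i=i$. I would first show that $\{a_m\}_{m\geq1}=\{[T_{2m+1}]\}_{m\geq1}$ is linearly independent by means of the Tristram--Levine signatures, which are additive concordance homomorphisms $\sigma_\omega\colon\OP{Conc}(\B S^3)\to\B Z$ at every $\omega$ on the unit circle that is not a root of the relevant Alexander polynomials. The signature function of $T_{2m+1}$ vanishes near $\omega=1$ and has its first jump at $\omega=e^{i\pi/(2m+1)}$, and these first-jump arguments $\pi/(2m+1)$ strictly decrease with $m$. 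Choosing $\omega_m$ with argument strictly between $\pi/(2m+1)$ and $\pi/(2m-1)$ and off every Alexander root, one checks that $\sigma_{\omega_m}(a_{m'})=0$ for $m'<m$ while $\sigma_{\omega_m}(a_m)=\pm2\neq0$; thus the matrix $(\sigma_{\omega_m}(a_{m'}))$ is triangular with nonzero diagonal, forcing linear independence of the $a_m$. Independence then transfers to the differences by telescoping: if $\sum_{n=1}^N c_n(a_n-a_{n-1})=0$ then, since $a_0=0$, one has $\sum_{n=1}^{N-1}(c_n-c_{n+1})a_n+c_Na_N=0$, and independence of $a_1,\dots,a_N$ forces $c_N=c_{N-1}=\dots=c_1=0$. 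Hence $\{\Psi_4(\gamma^n)\}_{n\geq1}$ is linearly independent, generates a subgroup isomorphic to $\B Z^\infty$, and is in particular infinite.

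The step I expect to be the main obstacle is the signature jump analysis: locating the first jump of each $T_{2m+1}$ precisely and guaranteeing that the points $\omega_m$ can be chosen simultaneously away from all the Alexander roots, so that each $\sigma_{\omega_m}$ is genuinely a homomorphism. This can be circumvented by instead invoking the classical fact (Litherland) that $\{T(2,2m+1)\}_{m\geq1}$ is linearly independent in the concordance group, after which only the elementary telescoping argument remains.
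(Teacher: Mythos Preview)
Your argument is correct and in fact establishes more than the paper does: you prove that the \emph{entire} family $\{\Psi_4(\gamma^n)\}_{n\geq 1}$ is linearly independent, whereas the paper only extracts a subsequence. The key difference is organisational. The paper first argues infinitude separately, by computing $\det(T_{2n+1}\#(T_{2n-1})^*)=(2n+1)(2n-1)$ and using that slice knots have square determinant; it then builds a carefully chosen subsequence $n_i=p_1\cdots p_i$ (with rapidly growing primes $p_i$) and invokes an explicit formula of Tristram for $\OP{sign}_{\omega_p}(T_{2n+1}\#(T_{2n-1})^*)$ to produce a triangular matrix of signatures. Your route is more structural: rewriting $\Psi_4(\gamma^n)=a_n-a_{n-1}$ with $a_m=[T_{2m+1}]$ reduces everything to the classical linear independence of the $(2,2m+1)$ torus knots (Litherland, or your first-jump Tristram--Levine argument), after which the telescoping step is purely algebraic. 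This avoids the determinant digression and the special subsequence entirely, at the cost of relying on (or reproving) the torus-knot independence result. The concern you flag about choosing each $\omega_m$ off the Alexander roots is genuine but harmless, since only finitely many roots are in play for any given finite relation; citing Litherland is the cleanest way to sidestep it.
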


\begin{proof}
By definition $\Psi_4(\gamma^n)$ equals to the concordance class of the closure
of the braid $\sigma^{2n}_1\sigma_3^{-2n}\sigma_1\sigma_2\sigma_3$.  For
$n\in\B N$ denote by $T_{2n-1}$ the knot obtained by taking a closure of the
braid $\sigma_1^{2n-1}\in\B B_2$.  It follows that the closure of the braid
$\sigma^{2n}_1\sigma_3^{-2n}\sigma_1\sigma_2\sigma_3$ equals to the knot
$T_{2n+1}\#(T_{2n-1})^*$.

The knot $T_{2n+1}$ is a $(2,2n+1)$ torus knot. Hence
$$
\Delta_{T_{2n+1}}(t)=\frac{(t^{4n+2}-1)(t-1)}{(t^2-1)(t^{2n+1}-1)}
=\sum_{i=0}^{2n}(-1)^it^{2n-i},
$$
where $\Delta_K(t)$ is the Alexander polynomial of a knot $K$. It follows that
$$
\OP{det}(T_{2n+1})=(2n+1), \quad \OP{det}((T_{2n-1})^*)=2n-1 \quad\rm{ and }
$$
$$
\OP{det}(T_{2n+1}\#(T_{2n-1})^*)=(2n+1)(2n-1),
$$
where the determinant of $K$ is defined to be $\OP{det}(K):=|\Delta_K(-1)|$.
Let $\{n_i\}_{i=1}^\infty$ be an increasing sequence of natural numbers such
that $p_i:=2n_i+1$ is a prime number.  It follows that for each $i\in\B N$ we
have
$$
\OP{det}(T_{p_i}\#(T_{p_i-2})^*)=p_i(p_i-2).
$$
Hence $\OP{det}(T_{p_i}\#(T_{p_i-2})^*)$ is not a square,
and for each $i>j\in \B N$
$$
\OP{det}(T_{p_i}\#(T_{p_i-2})^*\#-(T_{p_j})^*\#-(T_{p_j-2}))=p_i(p_i-2)p_j(p_j-2)
$$
is not a square. Since the determinant of a slice knot must be a square number,
the concordance classes of knots $(T_{p_i}\#(T_{p_i-2}))$ are pairwise distinct.
Hence the set $\{\Psi_4(\gamma^{n_i})\}_{i=1}^\infty$ is infinite in
$\OP{Conc}(\B S^3)$.

Let $\{p_i\}_{i=1}^\infty$ be a set of odd primes such that for each $i$
$$
p_i>2p_1\cdot\ldots\cdot p_{i-1}.
$$
Let $n_i:=p_1\cdot\ldots\cdot p_i$. In what follows we are going
to show that the set $\{\Psi_4(\gamma^{n_i})\}_{i=1}^\infty$ generates $\B
Z^\infty$ in $\OP{Conc}(\B S^3)$.

Recall that for each complex number $\omega\neq 1$, such that $|\omega|=1$,
there exists the $\omega$-signature (Levine-Tristram $\omega$-signature)
homomorphism $\OP{sign}_{\omega}\colon \OP{Conc}(\B S^3)\to \B Z$. For each odd prime $p$
denote by $\omega_p:=\exp\left(\frac{(p-1)\pi i}{p}\right)$. It follows from
\cite[Lemma 3.5]{MR0248854} that for each prime $p$ and each natural number $n$
we have
\begin{align*}
\OP{sign}_{\omega_p}(\Psi_4(\gamma^n))
&=\OP{sign}_{\omega_p}(T_{2n+1}\#(T_{2n-1})^*)\\
&=2-2\left(\left[\frac{n}{p}+\frac{1}{2p}\right]
-\left[\frac{n}{p}-\frac{1}{2p}\right]\right),
\end{align*}
where $[\cdot]$ denotes the integer part. Since $p_i>2p_1\cdot\ldots\cdot
p_{i-1}$ and $n_i:=p_1\cdot\ldots\cdot p_i$ we obtain
$$
\OP{sign}_{\omega_{p_{i+1}}}(\Psi_4(\gamma^{n_i}))=
\OP{sign}_{\omega_{p_{i+1}}}(T_{2n_i+1}\#(T_{2n_i-1})^*)=2\quad \rm{ and }
$$
$$
\OP{sign}_{\omega_{p_{i+1}}}(\Psi_4(\gamma^{n_j}))=
\OP{sign}_{\omega_{p_{i+1}}}(T_{2n_j+1}\#(T_{2n_j-1})^*)=
0\hspace{2mm} \textrm{ if }\hspace{2mm} i+1<j,
$$
and the proof follows.
\end{proof}

\begin{remark*}\rm
Note that the standard 3-dimensional genus of knots $\Psi_4(\gamma^n)$ goes to infinity when $n\to\infty$, since the genus of $T_{2n+1}=n$ and thus the genus of $\Psi_4(\gamma^n)$ equals to $2n-1$.
\end{remark*}

\subsection{Prime knots with unbounded stable genus}
\label{SS:unboundedsg}
This section provides details for Example \ref{E:signature}. We construct
a braid $\alpha\in \B B_n$ for $n\geq 3$ such that the four ball genus
of the knots $\Psi_n(\alpha^p)$ grows linearly with $p$.

Let $\OP{sign}(L)\in \B Z$ denote the signature invariant
of a link $L$. The restriction of the signature to knots
descends to a homomorphism
$$
\OP{sign}\colon \OP{Conc}(\B S^3)\to \B Z
$$
on the concordance group.
It is a well known fact due to Murasugi \cite{MR0171275} that
the following inequality
\begin{equation}\label{Eq:Murasugi}
|\OP{sign}(K)|\leq 2\OP{g}_4(K)
\end{equation}
holds for every knot $K$.
In other words the signature is Lipschitz with constant $C_{\OP{sign}}=2$.
It follows from Corollary \ref{C:compositions} that the
composition $\OP{sign}\circ \Psi_n\colon \B B_n\to \B R$
is a quasimorphism on the braid group.

In order to apply Corollary \ref{C:unboundedsg} we need to show that
the there exists a braid $\alpha\in \B B_n$ such that
the quasimorphism $\OP{sign}\circ \Psi_n$ is unbounded
on the cyclic subgroup generated by $\alpha$.

Let
$\eta_{i,n}:=\sigma_{i-1}\ldots\sigma_2\sigma_1^2\sigma_2\ldots\sigma_{i-1}\in \B B_n$,
be the braid presented in Figure \ref{fig:braids-eta-i-n}.
\begin{figure}[htb]
\centerline{\includegraphics[height=1.8in]{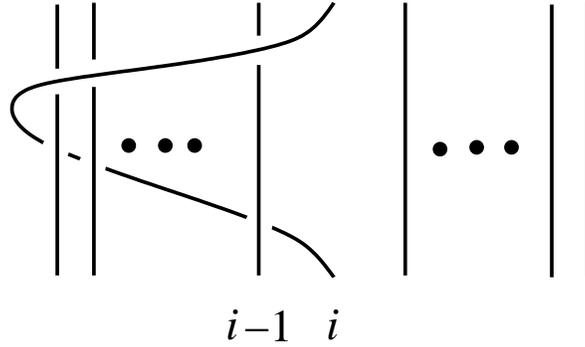}}
\caption{\label{fig:braids-eta-i-n} The braid $\eta_{i,n}$.}
\end{figure}

Let $\alpha=\eta_{2,n}^{-2}\eta_{3,n}\in \B B_n$. Observe that
$\alpha=\sigma_1^{-4}\sigma_2\sigma_1^2\sigma_2$ and hence
we get that $\alpha\in~[\B B_n,\B B_n]$. Notice moreover
that $\alpha$ is a pure braid and hence for
each integer $p$ we have $\sigma_{(\alpha^p)}=\delta$, where
$\delta:=\sigma_1\ldots\sigma_{n-1}$.

Let $\OP{sign}_n\colon \B B_n\to \B Z$ be a function defined by
$\OP{sign}_n(\beta)=\OP{sign}(\widehat{\beta})$. Gambaudo and Ghys showed in
\cite{MR2104597} that $\OP{sign}_n$ is a quasimorphism on
$B_n$ with a defect $D_{\OP{sign}_n}\leq n-1$. We denote by $\overline{\OP{sign}}_n$
the induced homogeneous quasimorphism. They also proved that
\begin{equation*}
\overline{\OP{sign}}_n(\eta_{i,n})=\left\{
\begin{array}{c}\begin{aligned}
&i,\hspace{10.5mm} \rm{if}\hspace{2mm} \textit{i}\hspace{2mm} \rm{is}\hspace{2mm} \rm{even},\\
&i-1,\hspace{4.4mm} \rm{if}\hspace{2mm} \textit{i}\hspace{2mm} \rm{is}\hspace{2mm} \rm{odd}.\\
 \end{aligned}
 \end{array}
 \right.
\end{equation*}
Since  the braids $\eta_{i,n}$ pairwise commute we have
$$\overline{\OP{sign}}_n(\alpha)
=\overline{\OP{sign}}_n(\eta_{2,n}^{-2})+\overline{\OP{sign}}_n(\eta_{3,n})=-2.$$

Since the closure $\widehat{\delta}$ is the unknot we get that
$$
|\OP{sign}_n(\alpha^p_n\delta)-\OP{sign}_n(\alpha^p_n)|\leq n-1,
$$
for every integer $p\in \B Z$. It follows from a general fact about the
homogenization of quasimorphisms that
$$
|\OP{sign}_n(\alpha^p_n)-\overline{\OP{sign}}_n(\alpha^p_n)|\leq D_{\OP{sign}_n}\leq n-1.
$$
By combining the two inequalities we obtain that
$$
|\OP{sign}_n(\alpha^p_n\delta)-\overline{\OP{sign}}_n(\alpha^p_n)|
=|\OP{sign}_n(\alpha^p_n\delta)+2p|\leq 2n-2,
$$
for every integer $p\in \B Z$. Recall that
$\OP{sign}_n(\alpha^p\delta)=(\OP{sign}\circ \Psi_n)(\alpha^p)$.
The above inequality then says that the restriction of the
quasimorphism  $\OP{sign}\circ \Psi_n$ to the cyclic subgroup
generated by $\alpha$ is within bounded distance from the
homogeneous quasimorphism $\overline{\OP{sign}_n}$ restricted
to the cyclic subgroup $\langle \alpha \rangle \subset \B B_n$.
This implies that $\OP{sign}\circ \Psi_n$ is unbounded on
the cyclic subgroup generated by $\alpha$ and, moreover,
its homogenization restricted to $\langle \alpha\rangle$
is equal to $\overline{\OP{sign}_n}$. It is then a consequence
of Corollary \ref{C:unboundedsg} that
$$
\OP{sg_4}(\Psi_n(\alpha^p))\geq
\frac{\left|(\overline{\OP{sign}\circ\Psi_n})(\alpha)\right|}{C_{\OP{sign}}}\cdot p - D_{\Psi_n}
\geq p - 3n +1.
$$

\bibliography{bibliography}
\bibliographystyle{acm}

\end{document}